\documentclass[12pt]{article}
\usepackage{amsmath,amsthm,amsfonts,amssymb}
\usepackage[utf8]{inputenc}
\usepackage{times}
\usepackage[T1]{fontenc}
\usepackage[a4paper]{geometry}
\relpenalty = 10000
\binoppenalty = 10000

\def\C{{\mathbb C}}
\def\D{{\mathbb D}}
\def\N{{\mathbb N}}
\def\R{{\mathbb R}}

\def\S{{\mathbb S}}

\def\O{{\mathrm{O}}}
\def\U{{\mathrm{U}}}
\def\cC{{\mathcal{C}}}
\def\Crit{\Gamma_{\!\varphi}}
\def\Discr{\Delta_\varphi}
\DeclareMathOperator{\re}{re}
\DeclareMathOperator{\im}{im}
\DeclareMathOperator{\ind}{ind}
\DeclareMathOperator{\Ker}{Ker}
\def\etc{,\dots,}
\def\tu#1{\textup{#1}}
\def\tref#1{\textup{\ref{#1}}}

\def\sN{\mathrm{N}^\ast}
\def\sSN{\mathrm{SN}^\ast}
\def\sT{\mathrm{T}^\ast}
\def\sDT{\mathrm{DT}^\ast}
\def\sST{\mathrm{ST}^\ast}
\def\rT{\mathrm{T}}

\def\d{\mathrm{d}}
\def\dc{\d^{\scriptscriptstyle{\C}}}
\def\dlambda{\d\lambda}
\def\drho{\d\rho}

\def\df{\d f}
\def\dg{\d g}
\def\dh{\d h}
\def\dq{\d q}
\def\dx{\d x}
\def\dy{\d y}
\def\dz{\d z}

\def\dvf{\vec} 

\def\del{\partial}

\def\on{\colon}
\def\too{\longrightarrow}
\def\mapstoo{\longmapsto}
\def\into{\hookrightarrow}
\def\stem#1{\item[\textbf{#1)}]}
\def\itemup#1{\item[\textrm{#1)}]}
\def\ups{\upsilon}
\def\eps{\varepsilon}
\def\sgn{\epsilon}

\def\prg#1{\langle #1 \rangle}
\def\de{\mathrel{:=}}
\def\rst#1{{\upharpoonright}_{\!#1}}

\def\lrp#1{\left(#1\right)}

\def\with{\mathrel{:}}

\def\hook{\mathbin{\lrcorner}}
\theoremstyle{plain}
\newtheorem{theorem}{Theorem}
\newtheorem{extension-theorem}{Extension Theorem}

\newtheorem{proposition}[theorem]{Proposition}
\newtheorem{lemma}[theorem]{Lemma}
\newtheorem*{claim}{Claim}
\theoremstyle{definition}
\newtheorem{definition}[theorem]{Definition}
\newtheorem{example}[theorem]{Example}
\theoremstyle{remark}

\newtheorem*{remark*}{Remark}
\newtheorem{remark}[theorem]{Remark}

\setcounter{theorem}{-1}

\def\hnu{\tilde\nu}
\def\cnu{\bar\nu}
\def\Lnu_#1{{\mathrm{L}_#1\nu}}

\begin{document}

\title{From Morse Functions to Lefschetz Fibrations\\ on Cotangent Bundles}

\author{Emmanuel \textsc{Giroux}\thanks
{\ CNRS and ENS--PSL (DMA, UMR 8553); e-mail: emmanuel.giroux@ens.fr.}}

\date{Paris, Summer 2025}

\maketitle

Although there exist many (closed integral) symplectic manifolds beyond complex
projective manifolds, it was demonstrated by S.~Donaldson that all of them admit
Lefschetz type pencils with symplectic fibers \cite{Do}, and that those could be
helpful to investigate the geometry. This set of ideas was further developed by
his school, especially by P.~Seidel who built on them to study Fukaya categories
\cite{Se}, with a slight change of framework: in place of closed symplectic
manifolds, he had to consider Liouville/Weinstein domains, and he consequently
replaced Lefschetz pencils by Lefschetz fibrations. The existence of symplectic
Lefschetz fibrations on Weinstein domains was then established in \cite{GP} by
adapting Donaldson's asymptotic methods. These fibrations are easy to define but
their geometry looks quite subtle. Actually, except maybe in dimension $4$, the
abundant literature on Lefschetz fibrations describes rather few significant
concrete examples, and the proof of the general existence result mentioned above
is not very instructive. The goal of this elementary paper is to produce and
analyze explicit Lefschetz fibrations on cotangent bundles:

\begin{extension-theorem}[for closed manifolds] \label{t:closed}
Let $M$ be a closed manifold, $\varphi \on M \to \R$ a Morse function, and $\nu$
an adapted gradient of $\varphi$ which satisfies the Morse--Smale condition.
Then $\varphi$ extends to a \tu(homotopically unique\tu) symplectic Lefschetz
fibration $h = f+ig \on \sT M \to \C$ whose imaginary part is the function
\[ g \on \sT M \to \R, \quad (p,q) \mapsto g(p,q) = \prg{p, \nu(q)} \]
and whose real part $f$ is $1$--homogeneous near infinity. In addition, $h$ can
be chosen equivariant under the actions of the fiberwise antipodal involution
and the complex conjugation.
\end{extension-theorem}

This statement requires some clarifications:
\begin{itemize}
\item
throughout the paper, $M$ is identified with the zero-section of $\sT M$, and
$h$ extends $\varphi$ in the sense that $h \rst M = \varphi$;
\item
a function $f \on \sT M \to \R$ is $d$--homogeneous near infinity if there is a
compact set $W_0 \subset \sT M$ such that $f(tp,q) = t^d f(p,q)$ for all $(p,q)
\in \sT M - W_0$ and all $t \ge 1$; 
\item
a symplectic Lefschetz fibration is a map satisfying the axioms of Definition
\ref{d:slf};
\item
a vector field $\nu$ is an adapted gradient of $\varphi$ if $\nu \cdot \varphi
> 0$ away from the critical points, and near each critical point $a$, there are
local coordinates $(q_1 \etc q_n)$ centered on $a$ (called Morse coordinates) in
which 
\begin{align*}
	\varphi(q) &= \varphi(a) + \sum_1^n \sgn_j q_j^2, \quad \sgn_j \in \{-1,1\},
	\\ \llap{\text{and}\quad}
	\nu(q) &= 2 \sum_1^n \sgn_j q_j \del_{q_j}.
\end{align*}
\end{itemize}
Most likely Theorem \ref{t:closed} still holds for an arbitrary gradient $\nu$
(however, choosing an adapted gradient leads to simpler calculations and a nicer
overall picture). In contrast, Theorem \ref{t:closed} fails if the (adapted)
gradient $\nu$ violates the Morse--Smale condition (see Remark \ref{r:MS}).

\medskip

A wellknown instance of a Morse function which extends to a simple Lefschetz
fibration is the following:

\begin{example}[the sphere case]
Let $M = \S^n$ denote the unit sphere
\[ M = \left\{x = (x_1 \etc x_{n+1}) \in \R^{n+1} \with
   \sum_1^{n+1} x_j^2 = 1\right\} \]
and consider the Morse function $\varphi \on M \to \R$ given by the coordinate
$x_{n+1}$ restricted to $M$. Then (as explained in Example \ref{x:locmod}) the
cotangent bundle $\sT M$ is symplectomorphic to the complex affine quadric 
\[ W = \left\{z = x+iy = (z_1 \etc z_{n+1}) \in \C^{n+1} \with
   \sum_1^{n+1} z_j^2 = 1\right\} \]
(with the symplectic form induced by the standard Kähler form of $\C^{n+1}$),
and the restriction of the coordinate $z_{n+1}$ to $W$ is a holomorphic (hence
symplectic) Lefschetz fibration $h$ extending $\varphi$. Its regular fiber is
the cotangent bundle of $\S^{n-1}$. This fibration does not quite satisfy the 
homogeneity condition of Theorem \ref{t:closed} but its real and imaginary parts
have the same growth on the cotangent fibers, which is the main point to have a
correct behavior near infinity.
\end{example}      

Theorem \ref{t:closed} can be easily generalized to exhausting Morse functions
on non-compact manifolds; the construction is exactly the same but the Weinstein
manifolds we obtain as regular fibers of the Lefschetz extension are no longer
of finite type. There is also a version for Morse functions on cobordisms which
we state below. By convention, a Morse function $\varphi \on M \to \R$, where
$M$ is a compact manifold with boundary, is a function whose critical points are
non-degenerate and which is locally constant and regular along $\del M$. We then
denote by $\del^-M$ (resp.\ $\del^+M$) the union of the boundary components
where the gradient $\nu = \nabla\varphi$ is pointing inward (resp.\ outward) and
we regard $M$ as a cobordism from $\del^-M$ to $\del^+M$.

\begin{extension-theorem}[for cobordisms] \label{t:compact}
Let $M$ be a compact manifold with boundary, $\varphi \on M \to \R$ a Morse
function, and $\nu$ an adapted gradient of $\varphi$ which satisfies the
Morse--Smale condition. Then $\varphi$ extends to a \tu(homotopically unique\tu)
map $h = f+ig \on \sT M \to \C$ with the following properties, where $B \de
\varphi(M) \oplus i\R \subset \C$\tu:
\begin{itemize}
\item
$g(p,q) = \prg{p, \nu(q)}$ for all $(p,q) \in \sT M$, and $f$ is $1$--homogeneous near infinity;
\item
$h \rst{h^{-1}(B)} \on h^{-1}(B) \to B$ is a symplectic Lefschetz fibration\tu;
\item
$\sT M$ retracts onto $h^{-1}(B)$ along the orbits of the Hamiltonian field of
$g$.
\end{itemize}
\end{extension-theorem}

Here are a few geometric properties of $h$ which are direct consequences of the
statements of Theorems \ref{t:closed} and \ref{t:compact}:
\begin{itemize}
\item
The critical points of $h$ lie in $M \subset \sT M$ and coincide with those of
$\varphi$.
\item
For every critical point $a$, the Lefschetz thimble of $h$ over the real path
reaching $\varphi(a)$ from below (resp.\ from above) is the conormal bundle of
the stable (resp.\ unstable) disk of $a$ for $\nu$ (cf.\ Lemma \ref{l:hnu}).
\item
For any regular fiber $F$ of $h$, the composite map $F \into \sT M \to M$ is
$(n-1)$--connected, where $n \de \dim M$ (the reason is that $\sT M \sim M$ is
homotopically obtained from $F \times \D^2$ by attaching $n$--handles).
\item
For every regular value $u$ of $\varphi$, the antipodal involution of $\sT M$
preserves the Lefschetz fiber $F_u \de h^{-1}(u)$ and reverses its symplectic
form. Thus, it defines a real structure on $F_u$ whose real locus is the regular
level set $Q_u \de \{\varphi=u\} \subset M$. Hence, this real structure changes 
drastically when $u$ crosses a critical value of $\varphi$. We will also show
that $F_u$ is a Weinstein manifold for the $1$--form induced by the canonical
Liouville form of $\sT M$, and that the homotopy class of this Weinstein
structure does not depend on the real regular value $u$ (cf.\ Proposition
\ref{p:w}).
\end{itemize}

As a consequence of Theorem \ref{t:closed} and Proposition \ref{p:w}, to every
``upgraded Morse function'' $(\varphi,\nu)$ on a closed manifold $M^n$, one can
canonically associate a Weinstein manifold $F^{2n-2}$ of finite type, its
``Lefschetz fiber'', which is the regular fiber of the (homotopically unique)
symplectic Lefschetz fibration $h$ extending $\varphi$. The geometry of $F$ is
quite interesting; it contains the vanishing cycles of the critical points along
with the regular level sets of $\varphi$ as exact Lagrangian submanifolds, and
it can be pretty explicitly described from those objects and their incidence
relations (see Section \ref{s:lf}). In short, Morse theory says that, when the
real parameter $u$ passes through a critical value, the associated regular level
set $Q_u \de \{\varphi=u\}$ undergoes a surgery of some index $k$, which means
that a copy of $\S^{k-1} \times \D^{n-k}$ is removed and replaced with a copy
of $\D^k \times \S^{n-k-1}$. As we will see, these two copies actually live
together in the Lefschetz fiber $F_u \de h^{-1}(u)$ where they form an embedded
Lagrangian sphere $\S^{n-1} = (\S^{k-1} \times \D^{n-k}) \cup (\D^k \times
\S^{n-k-1})$ which is the vanishing cycle of the corresponding critical point of
$h_\varphi$. Furthermore, each vanishing cycle comes tagged with a Morse index.
Thus, the symplectic invariants of $F$ can be regarded as invariants of the
pair $(\varphi,\nu)$. 

The construction of the Lefschetz fibration $h$ extending $\varphi$ roughly goes
as follows: by a coarse complexification process, we first extend $\varphi$ to 
an approximately holomorphic map $h^0 \on W_\delta \to \C$ on the small closed
$\delta$--tube $W_\delta$ about the zero-section in $\sT M$. The critical points
of $h^0$ have the required shape and its fibers are symplectic submanifolds away
from critical points. This map, however, is not a fibration at all (most fibers
over real values are cotangent tubes about the corresponding level sets of
$\varphi$), and we need to extend it over larger tubes in $\sT M$ in order to
complete the fibers till all of them have the same topology. It turns out that
this can be achieved by a very simple trick, namely, a convenient reordering of
the Morse--Bott function $(\re h^0) \rst{ \del W_\delta }$.

\subsection*{Credits and methods} 

This work is tightly related to the work of J.~Johns \cite{Jo1}, and so a few
comments are in order. In his PhD thesis, Johns obtained a weak version of the
extension result stated above (see \cite[Theorem A]{Jo1}): for any self-indexing
Morse function $\varphi \on M \to \R$ whose critical indices, besides $0$ and
$n \de \dim M$, lie in the interval $[(n-1)/2,(n+1)/2]$, he built a Weinstein
manifold $W$, a symplectic Lefschetz fibration $h \on W \to \C$ and an exact
Lagrangian embedding $\iota \on M \to W$ which contains all critical points of
$h$ and essentially satisfies $h \circ \iota = \varphi$. He sketched also a
proof that $\iota$ should be a homotopy equivalence, but he was not able to
identify $W$ with $\sT M$. Still, Theorem~A of \cite{Jo1} yields a Lefschetz
fibration which is definitely similar to ours, and in \cite{Jo2} Johns used it
to compare the flow category of $\varphi$ with the directed Donaldson--Fukaya
category of $h \on W \to \C$.

More recently, S.~Lee \cite{Le} also proposed an algorithm producing a Lefschetz
fibration on the disk cotangent bundle of a closed manifold $M$ out of a handle
decomposition of~$M$. Presumably his construction is roughly equivalent to ours, 
although his solution to the main problem encountered by Johns remains unclear
(cf.\ \cite[Subsection 6.3]{Le}). 

Our approach in this paper is much more direct than those of Johns and Lee, and
it is originated in the study of contact convexity. Example I.4.8 in \cite{Gi}
shows that the canonical contact structure on the sphere cotangent bundle of any
closed manifold is ``convex'' in the sense of Eliashberg--Gromov \cite{EG},
which means that it is invariant under some gradient flow. Years later, I
realized that this condition is equivalent to the existence of a ``supporting
open book'', and the Lefschetz fibration we construct here is a natural filling
of this open book.

\subsection*{Acknowledgements}

I wish to thank Paul Biran, Octav Cornea and Matija Sreckovi\'c for their
interest in this work and their many comments on a preliminary version of this
paper.

\section{Lefschetz fibrations and their local behavior} \label{s:slf}

Let $W$ be a manifold given with a Liouville form $\lambda$, namely a $1$--form
whose differential $\omega \de \dlambda$ is symplectic; the associated Liouville
vector field $\dvf\lambda$ is defined by $\dvf\lambda \hook \omega = \lambda$. 
The pair $(W,\lambda)$ is called:
\begin{itemize}
\item
a Liouville domain if $W$ is compact and $\lambda$ induces a positive contact
form on $\del W$ oriented as the boundary of $(W,\omega)$ (the latter condition
is equivalent to $\dvf\lambda$ pointing transversely outward along $\del W$);
\item
a Liouville manifold if $W$ is exhausted by Liouville domains $(W_k, \lambda
\rst{ W_k })$, $k \in \N$, and if the Liouville field $\dvf\lambda$ is complete;
\item
a Weinstein manifold if it is a Liouville manifold and if the Liouville field
$\dvf\lambda$ is gradientlike for some (unspecified but homotopically unique)
exhausting function; in this case, the form $\lambda$ is called a Weinstein
structure.
\end{itemize}
As explained in \cite{Ci}, the term gradientlike here has the following meaning:
$\dvf\lambda$ is gradientlike (for a function $\rho \on W \to \R$) if there is 
an isomorphism $L \on \rT W \to \sT W$ which is positive ($\prg{ L\eta, \eta }
> 0$ for every nonzero tangent vector $\eta$) and which sends $\dvf\lambda$ to
the differential of some function (namely, $\drho$); alternatively, $\rho$ is
called a Lyapunov function for $\dvf\lambda$.

The cotangent bundle $\sT M$ of the manifold $M$, endowed with its canonical
Liouville form $\lambda$ and symplectic structure $\omega \de \dlambda$, is a
Weinstein manifold: the (fiberwise radial) Liouville field $\dvf\lambda$ is
gradientlike for the Kinetic energy $\rho$ of any Riemannian metric on $M$. Our
convention is that, if $(q_1 \etc q_n)$ are local coordinates on $M$ and $(p_1
\etc p_n)$ denote the associated cotangent coordinates, then
\[ \lambda = \sum_1^n p_j \dq_j, \quad \text{and} \quad
   \omega = \sum_1^n \d p_j \wedge \dq_j. \]
Symplectic Lefschetz fibrations were first introduced by S.~Donaldson \cite{Do}
together with symplectic Lefschetz pencils on closed manifolds. Nowadays, there
is a specific notion of symplectic Lefschetz fibration attached to each class of
symplectic manifolds (such as closed symplectic manifolds, Liouville manifolds,
\ldots), the basic extra requirement being that the regular fiber lies in the
same class as the total space. Here is the notion we use in this paper:

\begin{definition}[Lefschetz fibrations on Liouville manifolds] \label{d:slf}
Let $(W,\lambda)$ be a Liouville manifold of dimension $2n$. A map $h \on W \to
\C$ is a \emph{symplectic Lefschetz fibration} if the following properties hold:
\begin{itemize}
\itemup{1}
the critical points of $h$ are of complex Morse type: each of them is the center
of complex coordinates $(z_1 \etc z_n)$ in which $\omega=\dlambda$ is a positive
$(1,1)$--form at $0$ and $h(z) = h(0) + \sum_1^n z_j^2$ (this model is actually
more restrictive than necessary, but it will easily be achieved);
\itemup{2}
the distribution $\Ker\dh \subset \rT W$ consists of symplectic subspaces (of
corank $2$ except at critical points), and the singular connection formed by its
symplectic orthogonal complement is complete: parallel transport does not escape
to infinity in finite time, but it does crash some points to the critical points
of $h$ (in a way prescribed by the quadratic local model); 
\itemup{3}
the manifold $W$ is exhausted by Liouville domains $(W_k, \lambda \rst{ W_k })$
such that, for every $w \in \C$ and for all sufficiently large $k \ge k_w$, the
fiber $F_w \de h^{-1}(w)$ intersects $\del W_k$ transversely along a positive
contact submanifold of $\del W_k$, and in addition the Liouville field on $F_w$
dual to $\lambda \rst{ F_w }$ is complete.
\end{itemize}
\end{definition}

The latter axiom above ensures that every regular fiber of $h$ is a Liouville
manifold. Moreover, because the Liouville forms of the fibers are induced by the
global $1$--form $\lambda$ of $W$, the holonomy maps given by parallel transport
along the (complete) connection are exact symplectomorphisms; as a result, there
is a consistent notion of exact Lagrangian submanifolds in the fibers $F_w$.
   
\begin{remark}[the case of cobordisms]
In our extension theorem \ref{t:compact} for cobordisms, the set $h^{-1}(B)$ is
not a genuine Liouville manifold but $h \rst{ h^{-1}(B) } \on h^{-1}(B) \to \C$
is a Lefschetz fibration in the sense that it satisfies the three axioms of the
above definition.
\end{remark}

\begin{example}[the local model under various angles] \label{x:locmod}
Consider $\C^n$ with its standard symplectic form $\omega \de \sum_1^n
\dx_j \wedge \dy_j$, where the complex coordinates are $z_j = x_j+iy_j$, $1 \le
j \le n$. This is a Weinstein manifold (actually, a Stein manifold); indeed,
$\omega = \dlambda$ where $\lambda \de \frac12 \sum_1^n (x_j\dy_j - y_j\dx_j)$
and $\lambda = \dc(|z|^2/4)$, so the Liouville field $\dvf\lambda$ is the gradient
of the function $z \mapsto |z|^2/4$. (Our convention in this paper is that
$\dc\rho(\_) = - \drho(i\_)$ for any function $\rho \on \C^n \to \R$.)

With the above definition, the quadratic function
\[ h \on \C^n \too \C, \quad
   z = (z_1 \etc z_n) \mapstoo h(z) \de \sum_1^n z_j^2, \]
is a symplectic Lefschetz fibration. For every $w \in \C$, the fiber $F_w \de
h^{-1}(w)$ is the complex affine quadric
\[ F_w = \left\{z \in \C^n \with \sum_1^n z_j^2 = w\right\}. \]
It has a nodal singularity at the origin if $w=0$. Otherwise, $F_w$ is smooth
and symplectomorphic to the cotangent bundle of the sphere $\S^{n-1}$. Indeed,
each rotation $z \mapsto e^{i\alpha} z$ preserves $\omega$ and takes $F_w$ to
$F_{e^{2i\alpha} w}$, and if $u \de e^{2i\alpha} w$ is a positive real number,
the map
\[ F_u \too \sT\S^{n-1}, \quad 
   z = x+iy \mapstoo (p,q) = (-|x|\,y, x/|x|), \]
is a symplectomorphism which pulls back the canonical Liouville form of $\sT
\S^{n-1}$ to the $1$--form induced by the (rotationally invariant) Liouville
form $\lambda$. The $(n-1)$--sphere
\[ Z_u \de F_u \cap \R^n = \left\{x \in \R^n \with
   \sum_1^n x_j^2 = u\right\}, \]
collapses to $0$ as $u \to 0$ and is called the vanishing cycle of $F_u$. Its
inverse image under the rotation $z \mapsto e^{i\alpha}z$ is the vanishing cycle
$Z_w$ in $F_w$; it can be characterized as the minimum locus of the function
$|Z|^2$ on $F_w$. 

The map $h$ will play a crucial role in our construction since it provides the
extension we want for any Morse function near a critical point. In the rest of
this section, we review some important geometric properties of $h$; though these
considerations do not formally enter in the proofs of Theorems \ref{t:closed}
and \ref{t:compact}, they are helpful to apprehend the geometry of the Lefschetz
fiber we will obtain.

\medskip

\stem{a} (Recollections on parallel transport.)
We briefly tell here how to determine the parallel transport between fibers of
$h$. The connection being the symplectic orthogonal complement of $\Ker \dh$,
it is the complex line field $z \mapsto \C \bar z$ on $\C^n - \{0\}$. Therefore,
given any real vector subspace $P \subset \R^n$, the complex subspace $\C P$ is
preserved by parallel transport since it is invariant under complex conjugation.
Moreover, the parallel transport in $\C P$ has the same behavior for all $P$ of
any fixed dimension because the map $h$ and the symplectic form $\omega$ (hence
the connection) are invariant under the action of the orthogonal group $\O_n
\subset \U_n$, which also acts transitively on the grassmannian. Finally, the
subspaces $\C P$ with $\dim P = 2$ cover $\C^n$ (every point $z \in \C^n$ is in
the complex span of $z + \bar z$ and $i (z - \bar z)$), so it suffices to study
parallel transport in a complex plane~$\C P$. Actually, since every fiber $F_w$
of $h$ meets such a $\C P$ along a copy of $\sT\S^1$, the parallel transport
between any two fibers can be described as a family (parameterized by $P$) of
annulus diffeomorphisms. (This reflects the $(\sT\S^1 - \S^1)$--bundle structure
of $\sT\S^{n-1} - \S^{n-1}$ over the grassmannian of planes in $\R^n$.) 

We now focus on parallel transport over (arcs of) circles about the origin in
$\C$, which is generated by the Hamiltonian field of the function $|h|^2$. Given
a plane $P \subset \R^n$, we can find (appropriate and temporary) coordinates
$(z_1,z_2)$ on $\C P$ (which are actually unitary up to a factor $\sqrt2$) in
which $h = h \rst{ \C P }$ takes the form
\[ h(z_1,z_2) = z_1z_2. \] 
Then, setting $z_j = x_j+iy_j$, $j \in \{1,2\}$, the Hamiltonian field of $|h|^2
= |z_1|^2 |z_2|^2$ reads 
\[ 2(x_2^2+y_2^2) (x_1 \del_{y_1} - y_1 \del_{x_1}) 
   + 2(x_1^2+y_1^2) (x_2 \del_{y_2} - y_2 \del_{x_2}). \]
Observing that the functions $|z_1|^2$ and $|z_2|^2$ are first integrals of this
vector field, we see that its flow is given by
\[ (z_1,z_2, t) \in \C P \times \R \mapstoo
   \left(e^{2i\,|z_2|^2t} z_1, e^{2i\,|z_1|^2t} z_2\right) \in \C P. \] 
Thus, every solution $t \in \R \mapsto (z_1(t),z_2(t))$ with initial condition
$(z_1,z_2)$ at $t=0$ satisfies
\[ h \bigl(z_1(t),z_2(t)\bigr) = z_1(t) z_2(t)
   = e^{2i(|z_1|^2+|z_2|^2)t} z_1z_2  \quad \text{for all $t \in \R$.} \]
Hence, the time necessary for $h(z_1,z_2)$ to rotate by a angle $\alpha$ is $t
= \alpha/2(|z_1|^2+|z_2|^2)$. Combining this with the expression of the flow, we
get an explicit formula for the parallel transport $\tau_\alpha$ from a fiber
$F_w \cap \C P$ to the fiber $F_{e^{i\alpha}w} \cap \C P$:
\[ \tau_\alpha (z_1,z_2) = \left(e^{is\alpha} z_1, e^{i(1-s)\alpha} z_2\right),
   \quad \text{where $s \de |z_2|^2 / (|z_1|^2+|z_2|^2)$.} \]
We can also parameterize $F_w \cap \C P$ by the map
\[ \psi_w \on \C^* \too \C P, \quad
   z \mapsto (z_1,z_2) =
   \left(\frac z { |w|^{1/2} }, w \cdot \frac{ |w|^{1/2} } z\right), \]  
(normalized so as to take the unit circle to $Z_w \cap \C P$). Then, writing $z
= re^{i\theta}$, we obtain
\[ \psi_{e^{i\alpha}w}^{-1} \circ \tau_\alpha \circ \psi_w (r,\theta)
   = \left(r, \theta + \frac{\alpha}{1+r^4}\right). \]
For $\alpha = 2\pi$, we recover the fact that the monodromy is a right-handed
Dehn twist.

We henceforth reset our coordinates $z_j = x_j+iy_j$.

\smallskip

\stem{b} (The real forms of $h$.)
For $0 \le k \le n$, let $M_k \subset \C^n$ denote the Lagrangian plane spanned
by the coordinates $(y_1 \etc y_k, x_{k+1} \etc x_n)$, namely 
\[ M_k \de \{z = x+iy \in \C^n \with
   x_1 =\dots= x_k = y_{k+1} =\dots= y_n = 0\}. \]   
On $M_k$, the map $h$ is real-valued and $\varphi_k \de h \rst{ M_k } \on M_k
\to \R$ is the function
\[ \varphi_k (y_1 \etc y_k, x_{k+1} \etc x_n) 
   = - y_1^2 -\dots- y_k^2 + x_{k+1}^2 +\dots+ x_n^2, \]   
which is the standard model for a Morse function near a critical point of index
$k$. Accordingly, the level sets of $\varphi_k$,
\[ Q_{k,u} \de F_u \cap M_k = \{\varphi_k=u\} \subset M_k, \quad
   u \in \R, \ 0 \le k \le n, \]
represent the various real forms of the complex quadric $F_u$.

We (symplectically) identify $\C^n$ with $\sT M_k$ using the map
\[ z = (x'+iy', x''+iy'') \in \C^k \times \C^{n-1}
   \mapstoo (p,q) = \bigl((x',-y''), (y',x'')\bigr) \in \sT M_k. \]
In the coordinates $(p,q)$, the function $g(z) \de \im h(z) = 2 \sum_1^n x_jy_j$
takes the form $g(p,q) = \prg{p, \nabla\varphi_k(q)}$, as required in Theorems
\ref{t:closed} and \ref{t:compact}. On the other hand, in the coordinates $z_j
= x_j+iy_j$, the canonical $1$--form $\lambda_k$ of $\sT M_k$ reads  
\[ \lambda_k \de \sum_1^k x_j \dy_j - \sum_{k+1}^n y_j \dx_j = \dc\rho_k \]
where $\rho_k(z) \de \tfrac12 \left(\sum_1^k x_j^2 + \sum_{k+1}^n y_j^2\right)$.

It follows that the form $\lambda_{k,w} \de \lambda_k \rst{ F_w }$ has a dual
field $\dvf\lambda_{k,w}$ which is the gradient of $\rho_{k,w} \de \rho_k \rst{
F_w }$. Moreover, for any $w \in \C^*$, a calculation shows that $\rho_{k,w}$ is
a Morse--Bott function whose critical locus is the union of the following sets:
\begin{itemize}
\item
the $(k-1)$--sphere $Z_w \cap (\C^k \times \{0\})$;
\item
the $(n-k-1)$--sphere $Z_w \cap (\{0\} \times \C^{n-k})$;
\item
the intersection $F_w \cap M_k$, which is empty as soon as $w \notin \R$.
\end{itemize}
It is worth noticing that the function $\rho_{k,w}$ is not proper if $1 \le k
\le n-1$, so $\lambda_{k,w}$ is not a Weinstein structure. Still, the behavior
of $\rho_{k,w}$ near the vanishing cycle $Z_w$, especially when $w=u \in \R$,
is enlightening to analyze the bifurcation of the Weinstein structure which
occurs at a critical point (see Lemma \ref{l:sharp}).

\smallskip

\stem{c} (The vanishing cycle and the level sets of $\varphi_k$.)
We now assume that $w=u \in \R$. Then the fiber $F_u$ contains two obvious exact
Lagrangian submanifolds:
\begin{itemize}
\item
the level set $Q_{k,u} = \{\varphi_k=u\} = F_u \cap M_k$, which is diffeomorphic 
to $\S^{k-1} \times \R^{n-k}$ if $u<0$ and to $\R^k \times \S^{n-k-1}$ if $u>0$; 
\item
the vanishing cycle $Z_u$, which is invariant under the flow of the gradient
$\dvf\lambda_{k,u}$ of the Morse--Bott function $\rho_{k,u}$ (indeed, $\lambda_k
\rst{ Z_u } = 0$).
\end{itemize}
These two submanifolds intersect cleanly, along $Z_u \cap (\C^k \times \{0\})
\simeq \S^{k-1}$ if $u<0$ and along $Z_u \cap (\{0\} \times \C^{n-k}) \simeq
\S^{n-k-1}$ if $u>0$, the intersection being the attaching sphere in $Q_{k,u}$.
To complete this picture, we have the following result (see also \cite[Theorem
1.8.4]{Sr}) :
\end{example}

\begin{lemma}[parallel transport and Lagrangian surgery] \label{l:surgery}
For $u>0$, the parallel transport $\tau_\pi \on F_{-u} \to F_u$ maps the level
set $Q_{k,-u}$ to an exact Lagrangian submanifold of $F_u$ which is isotopic to
that obtained from $Q_{k,u}$ and $Z_u$ by \tu(the Morse--Bott version of the
right-handed\tu) Lagrangian surgery.    
\end{lemma}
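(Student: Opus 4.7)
The plan is to reduce the claim to a one-parameter family of two-dimensional computations using the symmetry of the model, to apply the explicit parallel transport formula of item (a), and then to match the resulting curve with the Morse--Bott surgery slice by slice.

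I would first exploit the $\O_k \times \O_{n-k}$-symmetry: this group acts diagonally on $\R^n$ via the $(y_1\etc y_k)$ and $(x_{k+1}\etc x_n)$ coordinates, preserves $h$, $\omega$, $M_k$, $Q_{k,\pm u}$ and $Z_u$, and hence also the symplectic connection and $\tau_\pi(Q_{k,-u})$. The orbits in $\R^n$ are labelled by $(r,s) \de (|y'|,|x''|) \in \R_{\ge 0}^2$, with $Q_{k,-u}$ projecting to the hyperbola branch $r^2-s^2=u$. For a pair of orthonormal vectors $e_1 \in \R^k$ and $e_2 \in \R^{n-k}$, each generic orbit sits in the complex $2$-plane $\C P$ spanned by $(e_1,e_2)$, in which $h$ takes the form $\zeta_1^2+\zeta_2^2$; the linear change $w_1 = \zeta_1+i\zeta_2$, $w_2 = \zeta_1-i\zeta_2$ then puts $h$ into the model form $w_1 w_2$ of item (a). Parametrizing the positive branch of $Q_{k,-u} \cap \C P$ by $(w_1,w_2) = (i\sqrt{u}\,e^\theta,\,i\sqrt{u}\,e^{-\theta})$, the explicit formula for $\tau_\alpha$ yields
\[
  \tau_\pi(w_1,w_2) = \bigl(\sqrt{u}\,e^\theta\,e^{i(1/2+s)\pi},\; \sqrt{u}\,e^{-\theta}\,e^{i(3/2-s)\pi}\bigr), \quad s = \frac{1}{1+e^{4\theta}}.
\]
At $\theta=0$ this image passes through the point $(-\sqrt{u},-\sqrt{u}) \in Z_u \cap \C P$, while as $|\theta| \to \infty$ it asymptotes exactly to the two ends of $Q_{k,u} \cap \C P$.

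I would then compare $\tau_\pi(Q_{k,-u}) \cap \C P$ with the Morse--Bott surgery slice $(Q_{k,u} \#_\Sigma Z_u) \cap \C P$ inside the cylinder $F_u \cap \C P \simeq \sT\S^1$, where $\Sigma \de Z_u \cap (\{0\}\times\C^{n-k})$. A direct inspection shows that both curves connect the same two ends of $Q_{k,u} \cap \C P$ through a single transverse intersection with $Z_u \cap \C P$ of the same (right-handed) sign; by the standard two-dimensional Lagrangian surgery theory (cf.\ \cite[Theorem 1.8.4]{Sr}) they are therefore connected by a compactly supported exact Lagrangian isotopy in the cylinder, which can be chosen to depend smoothly on the pair $(e_1,e_2)$. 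Assembling these slice isotopies $(\O_k \times \O_{n-k})$-equivariantly over the Stiefel manifold of orthonormal pairs then produces the desired exact Lagrangian isotopy in $F_u$. Exactness of $\tau_\pi(Q_{k,-u})$ itself is automatic, since $\tau_\pi$ is an exact symplectomorphism and $\lambda_k \rst{ Q_{k,-u} } = 0$ because $Q_{k,-u} \subset M_k$ and $\lambda_k \rst{ M_k } = 0$.

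The main obstacle is compatibility at the degenerate orbits $x''=0$, where the $2$-plane $P$ collapses to a line and the orbit of $Q_{k,-u}$ reduces to the sphere $\S^{k-1}_{\sqrt{u}}$. A one-dimensional analogue of the formula above (working in a single complex line $\C v$ with $v \in \R^k$ unit) shows that $\tau_\pi$ sends any point $iy' \in \C^n$ with $y' \in \R^k$, $|y'| = \sqrt{u}$ to $-y' \in \R^n$, so the core sphere of $Q_{k,-u}$ maps onto the sphere $\R^k \cap Z_u$. This is precisely what the Morse--Bott surgery model predicts, and the slice isotopies then extend smoothly across these degenerate orbits by a standard equivariant smoothing argument.
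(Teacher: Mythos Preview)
Your approach is essentially the same as the paper's: both reduce to complex $2$--planes $\C P$ with $P = P' \oplus P''$ a sum of lines in $\R^k$ and $\R^{n-k}$, apply the explicit transport formula from Example~\ref{x:locmod}-a, and identify the resulting curve in the cylinder $F_u \cap \C P$ as the right-handed surgery of $Q_{k,u} \cap \C P$ and $Z_u \cap \C P$. The paper simply uses the $\psi_w$--parametrization and the formula $\psi_u^{-1}\circ\tau_\pi\circ\psi_{-u}(r,\theta)=(r,\theta+\pi/(1+r^4))$ in place of your direct $(w_1,w_2)$ computation, and then concludes with ``these observations, applied to all planes $P$, prove the lemma'' where you spell out the equivariant assembly and the degenerate-orbit analysis more carefully; but the substance is identical.
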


Actually, the parameters involved in the Lagrangian surgery can be chosen so as
to produce a Lagrangian submanifold which is Hamiltonian isotopic to $\tau_\pi
(Q_{k,-u})$.  

\begin{proof}
We resume here the method and the notations used in Example \ref{x:locmod}-a
to study parallel transport.

We fix a plane $P \subset \R^n$ and use $\psi_w$ to identify $F_{-u} \cap \C P$
with $\C^*$. The complexified plane $\C P$ intersects $Z_{\pm u}$ in the circle
$\psi_{\pm u}(\S^1)$, while $\C P$ meets $Q_{k,s\pm u} - Z_{\pm u}$ if and only
if $P = P' \oplus P''$ where $P'$ and $P''$ are lines in $\R^k \times \{0\}$ and
$\{0\} \times \R^{n-k}$, respectively, and in this case, $Q_{k,\pm u} \cap \C P$
consists of the two rays $\psi_w (\{\theta=0\} \cup \{\theta=\pi\})$. The formula
\[ \psi_u^{-1} \circ \tau_\pi \circ \psi_{-u} (r,\theta)
   = \left(r, \theta + \frac{\pi}{1+r^4}\right), \]
and a little drawing then shows that $\tau_\pi$ sends $Q_{k,-u} \cap \C p$ to a
curve in $F_u \cap \C P$ which, up to isotopy, is obtained by the right-handed
surgery of $Q_{k,u} \cap \C P$ and $Z_u \cap \C P$.

These observations, applied to all planes $P$, prove the lemma.
\end{proof}

\section{The two lifts of a vector field} \label{s:lifts} 

An important basic fact in our construction is that every vector field $\nu$ on
$M$ has two natural lifts:  
\begin{itemize}
\item
a vector field $\hnu$ on $\sT M$ which preserves the canonical Liouville form
$\lambda$; this property $\hnu \cdot \lambda = 0$ and the Cartan formula for the
Lie derivative then imply that $\hnu$ is Hamiltonian, with Hamiltonian function
$(\hnu \hook \lambda)(p,q) = - \prg{p, \nu(q)}$;
\item
a vector field $\cnu$ on $\sST M$ which  preserves the contact structure defined
by $\lambda$; actually, $\cnu$ is just the image of $\hnu$ under the projection
$\sT M - M \to \sST M$, where $\sT M - M$ can be viewed as the symplectization
of $\sST M$.
\end{itemize}
The next two lemmas describe elementary properties of these two lifts. 

\begin{lemma}[properties of $\hnu$] \label{l:hnu}
Let $\nu$ be a vector field on $M$ with nondegenerate singularities, and let
$\hnu$ denote its Hamiltonian lift on $\sT M$.

\stem{a}
The singularities of $\hnu$ lie on the zero-section $M$, coincide with those of
$\nu$, and are nondegenerate. Actually, $\hnu$ is tangent to each fiber $\sT_aM$
over a zero $a$ of $\nu$, and on $\sT_aM$, it agrees with the negative transpose
of the linearization $\Lnu_a$ of $\nu$ at~$a$.

\stem{b}
A singularity $a$ is hyperbolic for $\nu$ if and only if it is for $\hnu$, and
its stable \tu(resp.\ unstable\tu) manifold for $\hnu$ is the conormal bundle of
its stable \tu(resp.\ unstable\tu) manifold for~$\nu$.
\end{lemma}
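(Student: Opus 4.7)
The plan is to verify both parts by direct computation in local cotangent coordinates $(q_1 \etc q_n, p_1 \etc p_n)$ on $\sT M$, working from the explicit expression
\[
\hnu = \sum_i \nu^i(q)\,\del_{q_i}
       - \sum_{i,j} p_j\, \del_{q_i}\nu^j(q)\,\del_{p_i}
\]
that the Hamiltonian $-\prg{p,\nu(q)}$ produces. This single formula will drive everything.

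For part (a), the equation $\hnu(q,p)=0$ forces first $\nu(q)=0$, hence $q=a$, and then $p^{T}(\del_{q_i}\nu^j(a))_{i,j}=0$; nondegeneracy of $\Lnu_a$ then gives $p=0$. At $q=a$ the first sum vanishes, so $\hnu$ is tangent to the fiber $\sT_a M$ and there coincides with the linear vector field $p \mapsto -\Lnu_a^{T} p$. Reading off the derivative at $(a,0)$, the Jacobian is block-diagonal with diagonal blocks $\Lnu_a$ on the horizontal part and $-\Lnu_a^{T}$ on the fiber, hence invertible precisely when $\Lnu_a$ is.

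For part (b), the spectrum of that Jacobian is $\sigma(\Lnu_a) \cup (-\sigma(\Lnu_a))$, which avoids $i\R$ iff $\sigma(\Lnu_a)$ does; so $\hnu$ is hyperbolic at $(a,0)$ iff $\nu$ is hyperbolic at $a$. For the stable manifold identification, the key observation is that for \emph{any} $\nu$-invariant submanifold $N \subset M$, its conormal bundle $\sN N \subset \sT M$ is $\hnu$-invariant: in coordinates adapted so that $N = \{q^{k+1} = \dots = q^n = 0\}$, tangency of $\nu$ gives $\nu^j\rst N \equiv 0$ and hence $\del_{q^i}\nu^j\rst N \equiv 0$ whenever $i \le k < j$, and substitution into the formula above shows that on $\sN N$ both the off-axis $q$-components and the fiber-escaping $p$-components of $\hnu$ vanish. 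Applied to $N = W^s(\nu, a)$ (smooth near $a$ by the hyperbolic stable manifold theorem), this shows $\sN W^s(\nu, a)$ is $\hnu$-invariant.

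It remains to match $\sN W^s(\nu,a)$ with $W^s(\hnu,(a,0))$: both have dimension $n$, the former as $(n-k) + k$ where $k = \dim W^s(\nu,a)$, the latter as $k$ (from $\Lnu_a$) plus $n-k$ (from $-\Lnu_a^{T}$) counting negative-real-part eigenvalues of the Jacobian. For inclusion, an orbit starting in $\sN W^s(\nu,a)$ has $q(t) \to a$ by definition, while its $p$-component satisfies a linear ODE whose limiting system at $a$ is $\dot p = -\Lnu_a^{T} p$; the initial $p$ lies in the annihilator of the stable subspace of $\Lnu_a$, which is exactly the stable subspace of $-\Lnu_a^{T}$, so $p(t) \to 0$ by standard asymptotic stability arguments. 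The unstable statement follows by time reversal. The most delicate point throughout is keeping conventions straight so that ``negative transpose on the fiber'' exchanges stable and unstable subspaces exactly as required by conormality; once that bookkeeping is in place, the two-line formula for $\hnu$ does all the work.
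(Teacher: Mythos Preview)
Your overall architecture matches the paper's: establish that the conormal bundle of any $\nu$--invariant submanifold is $\hnu$--invariant, then show $\sN W^s(\nu,a) \subset W^s(\hnu,(a,0))$ and conclude by a dimension count. Your coordinate proof of the invariance statement is a nice explicit verification of what the paper simply asserts.

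There is, however, one imprecision in your inclusion argument. You write that ``the initial $p$ lies in the annihilator of the stable subspace of $\Lnu_a$''. That is only true when the initial base point is $a$ itself; at a general $q_0 \in W^s(\nu,a)$ the covector $p_0$ annihilates $T_{q_0}W^s(\nu,a)$, which is not the stable subspace of $\Lnu_a$. Your appeal to ``standard asymptotic stability arguments'' for the non-autonomous linear equation $\dot p = -A(q(t))^T p$ therefore needs more justification: knowing only that the limiting matrix has a stable subspace and that $p(t)$ stays in a moving conormal fibre converging to that subspace does not by itself force $p(t)\to 0$.

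The paper sidesteps this difficulty by linearizing at the fixed point rather than tracking orbits. It computes the tangent space of $\sN E^-(a)$ at $(a,0)$: this is $T_aE^-(a) \oplus \bigl(T_aE^-(a)\bigr)^\perp$, where the second summand is exactly the stable subspace $P_a^-$ of $-\Lnu_a^\top$. Thus the linearized flow of $\hnu$ is contracting on the whole tangent space $T_{(a,0)}\sN E^-(a)$, and the standard fact that an invariant manifold through a hyperbolic fixed point on which the linearization contracts must lie in the stable manifold gives the inclusion immediately. This is both shorter and avoids any non-autonomous ODE estimate; you could simply replace your asymptotic step by this linearization argument, for which all the ingredients (your block Jacobian and your invariance) are already in place.

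For the final equality, the paper uses the pleasant observation that the stable manifold of a hyperbolic zero of a Hamiltonian vector field is automatically Lagrangian, hence of dimension $n$; your direct eigenvalue count works just as well.
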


\begin{proof}
Everything is obvious except maybe the assertion about the stable and unstable
manifolds of~$a$ for~$\hnu$ when $a$ is a hyperbolic singularity of~$\nu$. The
conormal bundle of an invariant manifold for~$\nu$ is an invariant manifold for
$\hnu$ but we have to show that the conormal bundle $\sN E^-(a)$ of the stable
manifold $E^-(a)$ of $a$ for $\nu$ is the stable manifold of $a$ for $\hnu$.

If $a$ is a hyperbolic singularity of $\nu$ then $\sT_aM$ splits as the direct
sum of a stable subspace $P_a^-$ and an unstable subspace $P_a^+$ for $\hnu \rst
{ \sT_aM } = -\Lnu_a^\top$. Thus $a$ is a hyperbolic zero of $\hnu$. Next, the
tangent space of $\sN E^-(a)$ at $a$ is the direct sum of $\rT_aE^-(a) \subset
\rT_aM$ and the conormal of this subspace in $\sT_aM$, which is $P_a^-$. This
shows that $\hnu$ is contracting on $\rT_a \bigl(\sN E^-(a)\bigr)$ and implies
that $\sN E^-(a)$ is contained in the stable manifold of $a$ for $\hnu$. Now,
the stable manifold of~$a$ for~$\hnu$ is a Lagrangian submanifold, so it equals
$\sN E^-(a)$. 
\end{proof}

Before describing the dynamics of $\cnu$, we recall that a contact vector field
$\eta$ on a contact manifold $(V,\xi)$ has a special invariant manifold, called
its ``dividing hypersurface'', which is the set of points $a \in V$ where $\eta
(a) \in \xi_a$. This dividing hypersurface is empty if and only if $\eta$ is a
Reeb vector field, it contains all the singularities of $\eta$, and it is smooth
when these singularities are nondegenerate (see \cite{Gi}). For $\cnu$, the
dividing hypersurface is the sphere conormal bundle $\sSN(\nu)$ of $\nu$, which, 
by definition, is the union of the sphere conormal bundles of its orbits. It has
an obvious projection $\pi \on \sSN(\nu) \to M$ and can be viewed as a singular
sphere bundle over $M$ : it is a smooth $\S^{n-2}$--bundle over $M - \{\nu=0\}$
compactified by the fibers $\sST_aM \simeq \S^{n-1}$, $a \in \{\nu=0\}$. It can
alternatively be described as the projection to $\sST M$ of the zero-set of the
Hamiltonian function $g$ of $\hnu$ restricted to $\sT M - M$.

\begin{lemma}[properties of $\cnu$] \label{l:cnu}
Let $\nu$ be a vector field on $M$ with nondegenerate singularities, and let
$\cnu$ denote its contact lift on $\sST M$.

\stem{a}
The singularities of $\cnu$ in $\sST M$ lie in the fibers over the singularities
of $\nu$. For each zero $a$ of $\nu$, they form spheres in $\sST_aM$ along which
$\cnu$ is transversely nondegenerate. These are the spheres of the eigenspaces
of $-\Lnu_a^\top$ associated with the real eigenvalues. The non-real eigenvalues
give rise to invariant spheres filled up with periodic orbits along which $\cnu$
is transversely nondegenerate provided the eigenvalue is not pure imaginary.

\stem{b}
Let $a$ be a hyperbolic singularity of $\nu$ with $k$ attracting directions, and
denote by $C_a^-,C_a^+ \subset \sST_aM$ the respective projections of the stable
and unstable subspaces of $-\Lnu_a^\top = \hnu \rst{ \sT_aM }$. Then $C_a^-$ is
an invariant $(n-k-1)$--sphere which, inside the hypersurface $\sSN(\nu)$, is
transversely hyperbolic with stable manifold $\sSN E^-(a)$ and unstable manifold
$\pi^{-1}(E^+(a)) - \sSN E^+(a)$\tu; moreover, $\cnu$ is expanding along $C_a^-$
in the direction normal to $\sSN(\nu)$. Similarly, $C_a^+$ is an invariant
$(k-1)$--sphere which, inside $\sSN(\nu)$, is transversely hyperbolic with
unstable manifold $\sSN E^+(a)$ and stable manifold $\pi^{-1}(E^-(a)) - \sSN
E^-(a)$, and $\cnu$ is contracting along $C_a^+$ in the direction normal to
$\sSN(\nu)$.
\end{lemma}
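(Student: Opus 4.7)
My plan is to reduce everything to a linear calculation on the cotangent fiber $\sT_aM$ via Lemma \ref{l:hnu}(a), which identifies $\hnu\rst{\sT_aM}$ with the endomorphism $-\Lnu_a^\top$. Since $\cnu$ is the quotient of $\hnu$ under the positive-radial Liouville action, its restriction to $\sST_aM$ is simply the projectivized flow of $-\Lnu_a^\top$. For the explicit computations in part (b) I will work in Morse coordinates around $a$, in which $\nu$ is itself linear and $\hnu$ is the linear vector field with explicit flow $q_j(t)=e^{2\sgn_j t}q_j(0)$, $p_j(t)=e^{-2\sgn_j t}p_j(0)$.

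Part (a) is then Jordan theory on the real projective sphere. A direction $[p]$ is fixed by $\cnu$ iff $-\Lnu_a^\top p$ is parallel to $p$, i.e.\ iff $p$ is an eigenvector with real eigenvalue; thus the fixed set consists of the projections of the real eigenspaces. A pair of non-real complex conjugate eigenvalues $\alpha\pm i\beta$ contributes an invariant real $2$-plane on which the flow combines scaling by $e^{\alpha t}$ with rotation by $\beta t$; the projection kills the scaling and leaves a pure rotation of period $2\pi/|\beta|$ on the resulting circle. Transverse nondegeneracy is then read from the spectrum of the transverse linearization: eigenvalues $\mu'-\mu$ in the real case and Floquet multipliers $e^{2\pi(\alpha'-\alpha)/|\beta|}$ in the non-real case, both of which degenerate precisely when the associated eigenvalue becomes purely imaginary.

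For part (b), hyperbolicity yields the splitting $\sT_aM=P_a^-\oplus P_a^+$ with $P_a^\mp$ the annihilator of $\rT_a E^\mp(a)$, giving $C_a^\mp$ the announced dimensions. The identification $W^s(C_a^-)=\sSN E^-(a)$ inside $\sSN(\nu)$ is the easy half: combining Lemma \ref{l:hnu}(b) with projection, the complement of the zero section in $\sN E^-(a)=W^s(a,\hnu)$ maps to $\sSN E^-(a)$, on which the forward $\cnu$-orbits have base tending to $a$ along $E^-(a)$ and covector confined to the conormal fiber of $E^-(a)$, whose limit at $a$ is $P_a^-$; the dimension count $n-1=(n-k-1)+k$ upgrades the inclusion to an equality.

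The delicate half is $W^u(C_a^-)=\pi^{-1}(E^+(a))-\sSN E^+(a)$, since the naive analogue $\sSN E^+(a)$ is in fact $W^u(C_a^+)$ by the symmetric argument. In Morse coordinates, $q_j(-t)=e^{-2\sgn_j t}q_j(0)$ forces $q\to a$ exactly when $q\in E^+(a)$, while the rescaled covector $[p(-t)]=[e^{-2t}p_1\etc e^{-2t}p_k,p_{k+1}\etc p_n]$ converges to $C_a^-$ iff $(p_{k+1}\etc p_n)\neq 0$, which is precisely the condition that $[p]$ not lie in $\sSN E^+(a)$. For the normal-direction statement I would use an affine chart of $\sST M$ at a point of $C_a^\mp$: after normalizing by a chosen $p$-coordinate, the Hamiltonian $g=-\prg{p,\nu}$ becomes a defining function of $\sSN(\nu)$ whose gradient along $C_a^\mp$ is a nonzero multiple of $\dq_j$ for a suitable $j$, and the flow relation $q_j(t)=e^{2\sgn_j t}q_j(0)$ then yields expansion along $C_a^-$ and contraction along $C_a^+$. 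The main obstacle is this unstable-manifold identification: one has to recognize that the two limiting regimes of $[p(-t)]$ (convergence to $C_a^-$ versus $C_a^+$) are separated exactly by whether $p$ lies in the conormal of $E^+(a)$, which is what brings $\sSN E^+(a)$ into the description.
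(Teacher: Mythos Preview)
Your proposal is correct and follows the same strategy as the paper: reduce everything to the projection $\hnu \mapsto \cnu$, invoke Lemma~\ref{l:hnu} for the stable/unstable manifolds of $\hnu$, and finish by a dimension count inside $\sSN(\nu)$. The paper's proof is terser---it declares part~(a) obvious and, for part~(b), simply asserts that ``$\cnu$ being the projection of $\hnu$ implies that $\sSN E^-(a)$ and $\pi^{-1}(E^+(a)) - \sSN E^+(a)$ are respectively included in the stable and unstable manifolds of $C_a^-$''---whereas you actually carry out the limiting computation for $[p(-t)]$ that justifies the second inclusion.

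One small caveat: the lemma is stated for an arbitrary vector field $\nu$ with nondegenerate (hyperbolic, in part~(b)) zeros, not for an adapted gradient, so Morse coordinates with eigenvalues $\pm 2$ are not available in general. Your flow formula $q_j(t)=e^{2\sgn_j t}q_j(0)$, $p_j(t)=e^{-2\sgn_j t}p_j(0)$ is therefore a special case. The argument still goes through: linearize $\nu$ at $a$, split $\sT_aM = P_a^- \oplus P_a^+$ according to the sign of the eigenvalues of $-\Lnu_a^\top$, and observe that the rescaled backward orbit $[p(-t)]$ converges to $C_a^-$ exactly when the $P_a^-$-component of $p$ is nonzero, regardless of the precise eigenvalues. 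Since the paper only ever applies the lemma to adapted gradients, your simplification is harmless in context; just flag that the general case is the same computation with arbitrary hyperbolic spectrum.
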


\begin{proof}
Here again, everything is obvious except maybe the properties of the spheres
$C_a^-$ and $C_a^+$ over a hyperbolic singularity $a$ of $\nu$. First of all, we
note that the sphere conormal bundle of any invariant manifold for $\nu$ lies in
$\sSN(\nu)$. Hence, $\sSN(\nu)$ contains $\sSN E^-(a)$ and $\sSN E^+(a)$. Next,
$\cnu$ being the projection of $\hnu$ implies that $\sSN E^-(a)$ and $\pi^{-1}
\bigl(E^+(a)\bigr) - \sSN E^+(a)$ are respectively included in the stable and
unstable manifolds of $C_a^-$, and for dimensional reasons, they equal these
manifolds inside the invariant manifold $\sSN(\nu)$. Finally the behavior of
$\cnu$ in the direction normal to $\sSN(\nu)$ along $C_a^-$ follows also from
the behavior of $\hnu$ on $\sT_aM$ along $P_a^-$ by projection to $\sST_aM$.
\end{proof}

In the next sections, we shall apply the above considerations to an adapted
gradient $\nu$ of a given Morse function $\varphi \on M \to \R$. In this case,
it is useful to note that, for any regular value $u$ of $\varphi$, the inverse
image $\pi^{-1}(Q_u) \subset \sSN(\nu)$ of the level set $Q_u \de \{\varphi=u\}$
can be canonically identified with $\sST Q_u$: at a point of $Q_u$, hyperplanes
of $\rT Q_u$ correspond one-to-one to the hyperplanes of $\rT M$ which contain
$\nu$. The dividing hypersurface $\sSN(\nu)$ is actually a major character in
our story since, as we will see at the end of the paper, it is the double of the
Lefschetz fiber $F$ of $(\varphi,\nu)$. Another important remark is that, as a
consequence of the Morse--Smale property of $\nu$, the sphere conormal bundles
of the stable and unstable manifolds of $\nu$ are disjoint in $\sST M$ (see the
proof of Proposition \ref{p:f}).
 
\section{Fibrations with prescribed imaginary part}

The next three sections are devoted to the proof of Theorems \ref{t:closed} and
\ref{t:compact}. As in the statements of those results, $\varphi \on M \to \R$
is a Morse function and $\nu$ an adapted gradient. We choose a Riemannian metric
on $M$ for which $\nu = \nabla\varphi$ and which is the Euclidean metric in some
Morse coordinates near each critical point (such a metric is easily constructed
with a partition of unity). We denote by $\rho \on \sT M \to \R$ the associated
kinetic energy:
\[ \rho(p,q) \de \tfrac12 |p|^2 \quad
   \text{for all $q \in M$, $p \in \sT_qM$.} \]
For every $r>0$, we also set:
\begin{align*}
	W_r &\de \{(p,q) \in \sT M \with |p| \le r\} \subset \sT M, \\
	V_r &\de \{(p,q) \in \sT M \with |p| = r\} \simeq \sST M.
\end{align*}
Finally, we define $\Crit \subset M$ and $\Discr \subset \R$ to be the critical
locus of $\varphi$ and its discriminant locus, respectively, and, to avoid
irrelevant complications, we systematically assume that no two critical points
have equal values: $\varphi$ induces a bijection $\Crit \to \Discr$. 

\begin{proposition}[fibration criterion] \label{p:g}
Let $M$ be a connected manifold, $\varphi \on M \to \R$ a Morse function, and
$\nu$ an adapted gradient of $\varphi$. Assume that the Hamiltonian lift $\hnu$
of $\nu$ admits a Lyapunov function $f \on \sT M \to \R$ which extends $\varphi$
and is $1$--homogeneous near infinity. As usual, $g$ is the function $g(p,q)
\de \prg{p,\nu(q)}$.

\stem{a}
If $M$ is closed then the map $h \de f+ig \on \sT M \to \C$ is a symplectic
Lefschetz fibration.

\stem{b}
If $M$ is compact with non-empty boundary, suppose in addition that $\pm\dvf
\lambda \cdot f \ge 0$ on $\del^\pm\sT M$. Then the map $h \de f+ig \on \sT M
\to \C$ has the following properties, where $B \de \varphi(M) \oplus i\R \subset
\C$\tu:
\begin{itemize}
\item
$h \rst{ h^{-1}(B) } \on h^{-1}(B) \to B$ is a symplectic Lefschetz
fibration\tu;
\item
$\sT M$ retracts onto $h^{-1}(B)$ along the orbits of $\hnu$.
\end{itemize}
\end{proposition}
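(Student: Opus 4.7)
The plan is to verify the three axioms of Definition \ref{d:slf} for $h = f+ig$ in part (a), then adapt the argument to part (b) using a retraction along $\hnu$-orbits. First I would locate $\mathrm{Crit}(h)$: since $g(p,q) = \prg{p,\nu(q)}$ is (up to sign) the Hamiltonian of $\hnu$, the zeros of $\dg$ coincide with the singularities of $\hnu$, which by Lemma \ref{l:hnu}(a) lie on $M$ at the critical points of $\varphi$. At each such point the Lyapunov relation $L\hnu = \df$ with $L$ positive forces $\df = 0$ too, while off these points $\hnu \cdot f > 0$ implies $\df \neq 0$; hence $\mathrm{Crit}(h) = \Crit$.

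For the complex Morse local model at $a \in \Crit$, take Morse coordinates $(q_j)$ on $M$ and the induced cotangent coordinates $(p_j,q_j)$ on $\sT M$, so that $\varphi - \varphi(a) = \sum \sgn_j q_j^2$ and $g = 2\sum \sgn_j p_j q_j$. Setting $z_j \de q_j + ip_j$ if $\sgn_j = +1$ and $z_j \de -p_j + iq_j$ if $\sgn_j = -1$, a direct computation gives $\im\sum z_j^2 = g$, $\bigl(\re\sum z_j^2\bigr)\rst M = \varphi - \varphi(a)$, and $\omega$ is a positive $(1,1)$-form at $0$ in these coordinates. The remaining point — that $f$ itself agrees with $\sum \sgn_j(q_j^2-p_j^2) + \varphi(a)$ on a neighborhood of $a$ in $\sT M$, not merely on $M$ — does not follow from the Lyapunov hypothesis alone, and this is the main obstacle. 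I expect it will be resolved by constructing $f$ in later sections to have exactly this quadratic form near each critical point (compatibly with the Lyapunov and $1$-homogeneity constraints), after which axiom (1) holds by the computation above; alternatively one invokes a complex Morse lemma and uses the parenthetical comment in Definition \ref{d:slf} that the strict model is ``more restrictive than necessary''.

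Away from $\Crit$, axioms (2) and (3) rest on the identity $\omega(X_f, X_g) = \{f,g\} = \df(X_g) = \pm \hnu \cdot f$, which is strictly positive by the Lyapunov hypothesis. Consequently $X_f$ and $X_g$ span a symplectic $2$-plane transverse to $\Ker\dh$, so $\Ker\dh$ is symplectic of corank $2$. Completeness of the horizontal connection follows from $1$-homogeneity near infinity: $f$ and $g$ are of weight $1$ in $p$, $X_f$ and $X_g$ of weight $0$, and $\{f,g\}$ of weight $1$, so the horizontal lift of a bounded base vector has norm $O(1/|p|)$ at infinity and cannot escape any $|p|$-tube in finite time; near critical points the quadratic local model produces the required crashing. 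For axiom (3), the exhaustion $W_k = \{|p|\le k\}$ combined with $\dvf\lambda \cdot f = f$ near infinity (from $1$-homogeneity) gives both transversality of $F_w$ to $V_k$ along positive contact submanifolds and completeness of the fiberwise Liouville field.

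For part (b), the flow of $\hnu$ preserves $g$ (it is Hamiltonian for $-g$) and strictly increases $f$ off $\Crit$; the sign conditions $\pm\dvf\lambda \cdot f \ge 0$ on $\del^\pm\sT M$ prevent $\hnu$-orbits from escaping through the vertical boundary. One then obtains a retraction of $\sT M$ onto $h^{-1}(B)$ by flowing each point with $f \notin \varphi(M)$ along $\hnu$ — forward if $f < \min\varphi$, backward if $f > \max\varphi$ — until its $f$-value enters $\varphi(M)$, while $g$ is preserved throughout; the Lefschetz fibration axioms over $B$ are then verified over $h^{-1}(B)$ exactly as in part (a).
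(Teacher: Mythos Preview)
Your handling of axioms (1) and (2) is essentially in line with the paper: the identification of $\mathrm{Crit}(h)$ via Lemma~\ref{l:hnu}, the symplecticity of $\Ker\dh$ from $\hnu\cdot f>0$, and the homogeneity-based completeness of the connection all match the paper's reasoning (the paper phrases the latter as the bound $\drho(v)\le C\rho\,|\dh(v)|$ for horizontal $v$, but your weight calculus encodes the same scaling). Your caveat about the exact quadratic form of $f$ near critical points is also fair; the paper does not derive this from the hypotheses of Proposition~\ref{p:g} either, but relies on the explicit construction of $f$ in Section~\ref{s:coarse}.

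The real gap is in axiom (3). You assert that ``$\dvf\lambda\cdot f=f$ near infinity gives transversality of $F_w$ to $V_k$ along positive contact submanifolds,'' but this does not follow. Homogeneity alone tells you nothing about \emph{which} level sets of $f$ the Liouville field is tangent to; for a general $1$--homogeneous $f$ the fiber $F_w$ need not meet $V_r$ in a contact submanifold at all. The paper's proof hinges on a step you have skipped entirely: it first shows that $f$ \emph{vanishes identically} near infinity. This is not automatic --- in the closed case it is proved by contradiction: if $f>0$ outside some $W_r$, then a far-out level set $\{f=s\}\cong\sST M$ would be a closed hypersurface along which the Hamiltonian field $\hnu$ points transversely outward, which is impossible. (In the cobordism case the sign conditions $\pm\dvf\lambda\cdot f\ge0$ on $\del^\pm\sT M$ play the analogous role.) Once $f\equiv0$ near infinity, the Liouville field $\dvf\lambda$ is tangent to $F_0$ there, so $F_0\cap V_r$ is a positive contact submanifold; the contact property for $F_w\cap V_r$ with $w\ne0$ then follows by openness and the scaling $\sigma_t$, which takes $F_w\cap V_r$ to $F_{w/t}\cap V_{r/t}$. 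Without the vanishing claim your argument for axiom (3) does not go through.
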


This statement does not explicitly require $\nu$ to satisfy the Morse--Smale
condition because, as explained in Remark \ref{r:MS} below, this property is a
consequence of $\hnu$ admitting a Lyapunov function which is homogeneous near
infinity.

\begin{proof}
As we already explained, $f$ being a Lyapunov function for the Hamiltonian field
$\hnu$ of $g$ ensures that every fiber $F_w = h^{-1}(w)$ of $h$ is a symplectic
submanifold of $(\sT M, \omega)$ away from the critical points of $h$, and since
$\omega$ is exact, the induced symplectic form is exact. For every $w \in \C$,
we set $\lambda_w \de \lambda \rst{ F_w - \Crit }$, and we denote by $\eta$ the
vector field on $\sT M - \Crit$ equal to the Liouville field $\dvf\lambda_w$ on
each fiber $F_w$.

Now we have two completeness issues to address: the completeness of the vector
field $\eta$ and the completeness of the (singular) connection $\zeta$ which is
the symplectic orthogonal complement of the distribution $\Ker \dh$. For every
$t \in \R_{>0}$, we write $\sigma_t \on \sT M \to \sT M$ the fiberwise dilation 
by $t$. Then the relations $\sigma_t^*\lambda = t\lambda$ and $\sigma_t^*h = th$
near infinity imply that $\eta$ is invariant by every $\sigma_t$, $t>0$, and
hence is complete. On the other hand, $\sigma_t^*(\drho/\rho) = \drho/\rho$ on
$\sT M - M$, so there exists a constant $C>0$ such that 
\[ \drho_{(p,q)} (v) \le C \rho(p,q) \, \bigl|\dh_{(p,q)}(v)\bigr| \quad
	\text{for all $(p,q) \in \sT M$ and $v \in \zeta_{(p,q)}$.} \]
Then the completeness of $\zeta$ follows from the divergence of the integral
$\int_1^\infty \dx / x$; indeed, this divergence shows that any horizontal curve
in $\sT M$ along which $\rho$ goes to infinity is mapped by $h$ to a path of 
infinite length in $\C$.

We will prove that, for all sufficiently large $r \ge r_w$, the intersection
$F_w \cap V_r$ is a positive contact submanifold of $V_r$. If $M$ is closed,
this implies that $F_w$ is a Liouville manifold for every $w \in \C - \Discr$.
If $M$ has boundary, this conclusion remains valid for $w \in B = (\varphi(M)
\oplus i\R) - \Discr$ due to the behavior of $\varphi$ on $\del M$ and to the
assumption that $\pm (\dvf\lambda \cdot f) \ge 0$ on $\del^\pm \sT M$: this
shows that $F_w \cap \del\sT M$, even though it may be non-empty, consists of
points where $F_w$ remains a smooth submanifold of $\sT M$ and has no boundary
there.

\begin{claim}
The function $f$ vanishes near infinity.
\end{claim}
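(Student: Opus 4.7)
The plan is to reduce the claim via the 1-homogeneity assumption and then exploit the Lyapunov structure. First, choose $r_0$ with $W_0 \subset W_{r_0}$; the identity $f(tp,q) = tf(p,q)$ for $t \ge 1$ on $\sT M \setminus W_{r_0}$ shows that the claim reduces to $f \rst{V_{r_0}} \equiv 0$, since then scaling propagates the vanishing to all of $\sT M \setminus W_{r_0}$. So the content of the claim is really the vanishing of $f$ on one cosphere in the homogeneity region.

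Second, I would exploit the structural identities coming from the homogeneity. Euler's identity yields $\dvf\lambda \cdot f = f$ on the homogeneity region, and the commutator $[\dvf\lambda, \hnu] = 0$ (from 1-homogeneity of $g$ together with $\sigma_t^*\omega = t\omega$) shows $\hnu$ is $\sigma_t$-invariant, descending to the contact lift $\cnu$ on $\sST M$ of Lemma \ref{l:cnu}. Under the identification $V_{r_0} \simeq \sST M$, the restriction $f \rst{V_{r_0}}$ thus descends to a function $\bar f$ on $\sST M$, and the Lyapunov condition for $f$ becomes a compatible condition on $\bar f$ with respect to $\cnu$.

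Finally, I would combine the dynamics of $\cnu$ (Morse--Smale on $\sST M$, with invariant critical spheres $C_a^\pm$ over each critical point of $\varphi$, by Lemma \ref{l:cnu}) with the boundedness $f \rst M = \varphi \in \varphi(M)$ to force $\bar f \equiv 0$. The main obstacle is precisely this last step: articulating why the Lyapunov-like structure for $\bar f$ on the compact contact manifold $\sST M$, combined with the finite range of values prescribed along the critical spheres, is incompatible with any nonzero $\bar f$ — the subtlety being that generic $\hnu$-orbits escape to infinity in the fibers rather than converging to critical points, so one must push the information through the contact quotient carefully. An alternative, more robust route would be to replace $f$ by a cutoff modification $\tilde f = \chi(\rho)\, f$ (with $\chi$ equal to $1$ on $W_{r_0}$ and supported in some larger $W_{r_1}$), and to verify directly that the Lefschetz fibration conditions for $\tilde h = \tilde f + ig$ set up at the start of the proof remain valid after this replacement — critical points of $h$ are unchanged since they lie in $W_{r_0}$, and the completeness and contact conditions in $\sT M \setminus W_{r_1}$ become conditions on $ig$ alone.
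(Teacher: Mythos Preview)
Your proposal does not prove the claim. You reduce correctly to showing that $f$ vanishes on one cosphere $V_{r_0}$, but then you explicitly acknowledge that the crucial step --- forcing $\bar f \equiv 0$ from the contact dynamics of $\cnu$ --- is an unresolved obstacle. The route you sketch through Lemma~\ref{l:cnu} and ``boundedness $f\rst M = \varphi$'' does not close: the values of $f$ on the zero-section $M$ say nothing directly about $f$ on $V_{r_0}$, and the Morse--Smale-type structure you invoke for $\cnu$ is neither established at this point nor obviously sufficient. Your alternative of replacing $f$ by $\tilde f = \chi(\rho)\,f$ is not a proof of the claim at all, and it also breaks the surrounding argument: the whole proof of Proposition~\ref{p:g} rests on $f$ being a Lyapunov function for $\hnu$ (this is what makes the fibers symplectic), and $\tilde f$ will not be one, since $\hnu \cdot \tilde f = \chi(\rho)\,(\hnu \cdot f) + \chi'(\rho)\,(\hnu \cdot \rho)\,f$ has no reason to be positive in the transition region.

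The paper's argument is far simpler and bypasses all contact dynamics. Assume, say, $M$ is closed and $f>0$ on $\sT M - W_r$. From $\dvf\lambda \cdot f = f > 0$ one sees that every sufficiently high level set $X_s = \{f=s\}$ is a closed hypersurface transverse to $\dvf\lambda$, hence diffeomorphic to $\sST M$ and bounding a compact region. But the Lyapunov condition $\hnu \cdot f > 0$ then says the Hamiltonian field $\hnu$ is everywhere transverse to $X_s$ pointing outward --- impossible, since Hamiltonian flows preserve symplectic volume. The boundary cases are handled by the sign condition $\pm(\dvf\lambda \cdot f)\rst{\del^\pm\sT M} \ge 0$, which together with $\dvf\lambda \cdot f = f$ pins the sign of $f$ on each boundary component, and connectedness (or a mix with the closed argument) finishes it. The idea you are missing is precisely this volume/flux obstruction: a Hamiltonian vector field cannot cross a compact separating hypersurface in one direction.
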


\begin{proof}[Proof of the claim]
Choose $r>0$ large enough that $f$ is $1$--homogeneous outside $W_r$, meaning
that $\dvf\lambda \cdot f = f$.

If $M$ has boundary and $\del^-M, \del^+M$ are both non-empty, this homogeneity
condition, together with the condition $\pm(\dvf\lambda) \cdot f) \rst{ \del^\pm
\sT M } \ge 0$, implies that $\pm f \rst{ \del^\pm\sT M - W_r } \ge 0$. Hence,
$f$ vanishes in $\sT M - W_r$ since $M$ is connected.

If $M$ is closed, suppose (arguing by contradiction) that $f$ is positive on
$\sT M - W_r$. The homogeneity condition $\dvf\lambda \cdot f = f$ then implies
that the level sets of $f$ are transverse to $\dvf\lambda$ in $\sT M - W_r$. Now
choose a level set $X_s \de \{f=s\}$ with $s>0$ so large that $X_s$ is contained
in $\sT M - W_r$ (and is therefore diffeomorphic to $\sST M$). The condition
$\hnu \cdot f > 0$ then says that $\hnu$ is pointing transversely upward along
$X_s$, which is obviously impossible for a Hamiltonian vector field.

If $M$ has boundary but either $\del^-M$ or $\del^+M$ is empty then a mix of the
two previous arguments yields the result. Assume for instance that $\del^+M$ is
not empty but $\del^-M$ is. Then $f \ge 0$ on $\del^+\sT M - W_r$. If $f$ does
not vanish, we can construct a level set $X_s = \{f=s\}$ (diffeomorphic to $\sST
M$) which encloses a compact region of $\sT M$. On the boundary of this region
(made up of $X_s$ and a piece of $\del^+\sT M$), the vector field $\hnu$ points
transversely outward, which is again impossible.
\end{proof}

\begin{claim}
If $r>0$ is so large that $\dvf\lambda \cdot f = f$ near $V_r$ then $F_0 \cap
V_r$ is a non-empty positive contact submanifold of $V_r$.
\end{claim}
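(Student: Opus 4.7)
The plan is to first simplify $F_0 \cap V_r$ to $\lra{g=0} \cap V_r$ using the previous claim, then to check that it is non-empty and smooth, and finally to derive the contact property from the $1$--homogeneity of $h$.

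By the previous claim, $f$ vanishes outside some compact subset of $\sT M$, so for $r$ large enough, $f \equiv 0$ on $V_r$, and therefore $F_0 \cap V_r = \lra{g=0} \cap V_r$. Non-emptiness is immediate fiberwise: in each cotangent fiber $\sT_q M \simeq \R^n$, the sphere $V_r \cap \sT_q M$ meets the hyperplane $\lra{p \in \sT_q M \with \prg{p,\nu(q)}=0}$ (or all of $\sT_q M$ when $q \in \Crit$) in a non-empty subsphere. For smoothness, note that $\dh = 0$ forces $\dg = 0$; the vertical part of $\dg$ vanishes only when $\nu(q) = 0$, i.e., $q \in \Crit$, and then in Morse coordinates the horizontal part $2\sum_j \sgn_j p_j \dq_j$ forces $p = 0$. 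Hence all critical points of $h$ lie on the zero section $M$, and $F_0$ is a smooth codimension--$2$ submanifold of $\sT M$ in a neighborhood of $V_r$.

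The core step is the contact condition. Since $g(p,q) = \prg{p,\nu(q)}$ is fiberwise linear, hence $1$--homogeneous, and $f$ is $1$--homogeneous near $V_r$ by hypothesis, the full map $h$ satisfies $\dvf\lambda \cdot h = h$ near $V_r$; consequently the fiberwise dilations preserve $F_0 = h^{-1}(0)$ there, and the Liouville field $\dvf\lambda$ is tangent to $F_0$ along $V_r$. Restricting the identity $\dvf\lambda \hook \omega = \lambda$ to $F_0$ then exhibits $\dvf\lambda \rst{F_0}$ as the Liouville field of the induced symplectic form on $F_0$.

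To conclude, $\dvf\lambda$ is outward-transverse to $V_r$ in $\sT M$ (it is the fiberwise radial field, and $\rho$ is a defining function for $V_r$), so its restriction to $F_0$ is outward-transverse to $V_r \cap F_0$ inside $F_0$. This simultaneously yields the transversality of $V_r$ and $F_0$ in $\sT M$ and shows that $\lambda \rst{F_0 \cap V_r}$ is a positive contact form, whence $F_0 \cap V_r$ is a positive contact submanifold of $(V_r, \lambda \rst{V_r})$, as required. The main subtle point, and the only geometric input beyond routine Liouville-field manipulations, is the tangency of $\dvf\lambda$ to $F_0$, which rests on the $1$--homogeneity of \emph{both} $f$ and $g$.
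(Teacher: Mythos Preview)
Your proof is correct and follows essentially the same approach as the paper: the key step in both is that the $1$--homogeneity of $f$ and $g$ makes $\dvf\lambda$ tangent to $F_0$, whence its outward transversality to $V_r$ yields the positive contact property. Your preliminary reduction to $\{g=0\}\cap V_r$ via the previous claim and the explicit checks of non-emptiness and smoothness are correct add-ons that the paper leaves implicit.
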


\begin{proof}[Proof of the claim]
Since $f$ and $g$ are homogeneous, the Liouville field $\dvf\lambda$ is tangent
to $F_0$ near $V_r$. On the other hand, $\dvf\lambda$ points transversely
outward along $V_r$, so it also points transversely out of $F_0 \cap W_r$ along
$F_0 \cap V_r$. This means exactly that $F_0 \cap V_r$ is a 
(non-empty) positive contact submanifold of $V_r$.
\end{proof}

To complete the proof of Proposition \ref{p:g}, we fix an $r_0>0$ such
that $f$ is homogeneous in $\sT M - W_{r_0}$. Since $F_0 \cap V_{r_0}$ is a
contact submanifold of $V_{r_0}$ and this property is ``open'', there exists an
$\eps>0$ such that, for all $|u|, |v| \le \eps$, the intersection $F_w \cap
V_{r_0}$, $w = u+iv$, is a positive contact submanifold of $V_{r_0}$.

Now pick any number $s>0$ and set $r(s) = r_0 s / \eps$. We claim that, for all
$|u|, |v| \le s$ and $r \ge r(s)$, the intersection $F_w \cap V_r$ is a positive
contact submanifold of $V_r$. Indeed, the radial projection $V_r \to V_{r_0}$
takes $F_w \cap V_r$ to $F_{r_0w/r} \cap V_{r_0}$ (due to the homogeneity of $f$
and $g$), and the hypotheses $r \ge r(s) = r_0 s / \eps$ and $|u|, |v| \le s$
imply that $|r_0u/r|, |r_0v/r| \le \eps$, and so $F_{r_0w/r} \cap V_{r_0}$ is a
positive contact submanifold of $V_{r_0}$.
\end{proof}

\begin{remark}[on the Morse--Smale condition] \label{r:MS}
We briefly explain here why the Morse--Smale condition is necessary in Theorems
\ref{t:closed} and \ref{t:compact}.

Assume that $\nu$ violates the Morse--Smale condition. This means that $\nu$ has
an orbit $\gamma$, running from a critical point $a$ to a critical point $b$,
along which the unstable manifold $E^+(a)$ and the stable manifold $E^-(b)$ of
$\nu$ are not transverse. Therefore, given a point $c \in \gamma$, the subspace
$\rT_cE^+(a) + \rT_cE^-(b)$ lies in some hyperplane $\tau_c \subset \rT_cM$.
Spreading $\tau_c$ by the flow of $\nu$, we get a hyperplane field $\tau$ along
$\gamma$ which contains $\rT E^+(a) + \rT E^-(b)$ at every point of $\gamma$ and
which extends up to the endpoints $a$ and $b$ of $\gamma$ (this can be seen from
the shape of $\nu$ in Morse coordinates near $a$ and $b$). We then consider,
over the segment $C \de \gamma \cup \{a,b\}$, the real line bundle
\[ R \de \bigcup_{q \in C} \tau_q^\bot \subset \sT M \rst C. \]
We denote by $\del_aR$ and $\del_bR$ the components of $\del R$ containing $a$
and $b$, respectively. By construction (see also Lemma \ref{l:hnu}), $\hnu$ is
tangent to $R$ as well as to $\del R$, and the dynamics of $\hnu \rst R$ is very
simple:
\begin{itemize}
\item
$\hnu \rst{ \del_aR }$ (resp.\ $\hnu \rst{ \del_bR }$) is a dilating (resp.\ 
contracting) linear vector field on a real line;
\item
all orbits of $\hnu$ in $R - \del R$ go from $a$ to $b$.
\end{itemize}
Viewing $C$ as the zero-section of $R$, we choose one of the two components of
$R - C$ and denote its closure by $R^+ \subset R$. We also define $\del_q^+R = 
\del_qR \cap R^+$ for $q \in \{a,b\}$.

If there exists a Lefschetz fibration $h$ of the form $h = f+ig$, then $\hnu$ is
a pseudogradient of $f$, and it follows from the dynamical behavior of $\hnu$
on $R$ that:
\begin{itemize}
\item
$f(R)$ is contained in $I \de [\varphi(a),\varphi(b)]$ (observe that this bound
already prevents $f$ from being homogeneous of degree $1$ or more); 
\item
$f(\del_a^+R)$ (resp.\ $f(\del_b^+R)$) is an open interval of $I$ containing $a$
(resp.\ $b$), and these two open intervals are disjoint. For the latter claim,
we argue as follows: if $f(a,p_0) = f(b,p_1) = u$, then near $(a,p_0)$ (resp.\ 
$(b,p_1)$) the set $\{f=u\} \cap R$ is an arc transverse to $\del_a^+R$ (resp.\ 
$\del_b^+R$); hence, there are plenty of interior orbits which intersect both
arcs, so $f$ takes the same value twice on every such orbit, contradicting that
$\hnu$ is a pseudogradient of $f$.
\end{itemize}
Now take $u \de f(a,p)$ for some $(a,p) \in \del_a^+R$ with $p \ne 0$. The above
observations show that, for the symplectic connection defined by $h$ (which is
spanned by $\hnu$ over $\R$), the arc $[u,\varphi(b)]$ has no horizontal lift
stemming from $(a,p) \in \del_a^+R$. Hence, the connection is not complete, so
$h$ is not a fibration.

To summarize this discussion, if $\nu$ violates the Morse--Smale condition, then
its Hamiltonian lift $\hnu$ provides a necessarily incomplete connection between
the level sets of any Lyapunov function it admits.
\end{remark}

\section{Coarse complexification of a Morse function} \label{s:coarse}

In the same framework as before, we denote by $\hnu$ and $\cnu$ the Hamiltonian
and contact lifts of $\nu$, respectively, and we use the splitting
\[ \sT M - M = \sST M \times \R_{>0}, \quad \text{with} \quad
	\sST M \times \{r\} = V_r \quad \text{for all $r>0$,} \]
to view $\cnu$ as a vector field on $\sT M - M$ tangent to each hypersurface
$V_r$.

We recall that our overall goal is to extend the Morse function $\varphi$ to a
symplectic Lefschetz fibration $h \on \sT M \to \C$, and in this section we
construct $h$ in a neighborhood of the zero-section $M \subset \sT M$. Very
explicitly, we consider the map $h^0 \on \sT M \to \C$ defined by 
\[ h^0(p,q) \de \varphi(q) - \tfrac12 \chi(q) \nabla^2\varphi_q (p,p)
	+ i \prg{p, \nu(q)}, \quad \text{for all $(p,q) \in \sT M$.} \]
Here $\nabla^2\varphi_q$ is the covariant second derivative of $\varphi$ at $q$,
regarded as a symmetric pairing on $\sT_qM \cong \rT_qM$, and $\chi \on M \to
[0,1]$ is a cut-off function equal to $1$ at least in a neighborhood of $\Crit$;
if $M$ is closed, $\chi \equiv 1$ is perfectly fine, but if $M$ has boundary it
is technically convenient to take $\chi \equiv 0$ near $\del M$. As a matter of
fact, $h^0$ is a ``first order complexification'' of $\varphi$. The imaginary
part $g \de \im h^0$ is the function whose Hamiltonian field is $-\hnu$ (cf.\
Section \ref{s:lifts}) and it will remain globally the imaginary part of our
final Lefschetz fibration $h$. As for the real part $f^0 \de \re h^0$, we will
have to modify it far away from the zero-section (we will mostly rearrange its
critical values) but it has nice basic properties near the zero-section:  

\begin{lemma}[properties of $h^0$] \label{l:h0}
Let $M$ be a compact manifold, $\varphi \on M \to \R$ a Morse function, and
$\nu$ an adapted gradient of $\varphi$. Then there exists a radius $\delta>0$
such that\tu:
\begin{enumerate}
\itemup{1}
in $W_\delta$, the critical points of $h^0$ coincide with those of $\varphi$ and
are of complex Morse type\tu;
\itemup{2}
in $W_\delta$, the real part $f^0 = \re h^0$ is a Lyapunov function for the 
Hamiltonian field~$\hnu$\tu;
\itemup{3}
on $V_r$ with $0 < r \le \delta$, the restriction $f_r^0 \de f^0 \rst{ V_r }$ is
a Morse--Bott Lyapunov function for the contact field $\cnu$.
\end{enumerate}
\end{lemma}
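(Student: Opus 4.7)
The plan is to verify the three assertions by explicit computation in Morse coordinates around each critical point of $\varphi$, combined with a compactness and continuity argument on the complementary region where the perturbation terms are of order $|p|^2$.

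Fix a critical point $a$ of $\varphi$ and the Morse coordinates $(q_1,\dots,q_n)$ provided by the hypothesis, in which the metric is Euclidean, $\varphi = \varphi(a) + \sum_j\sgn_j q_j^2$, $\nu = 2\sum_j\sgn_j q_j\,\del_{q_j}$, and $\nabla^2\varphi_q = 2\,\mathrm{diag}(\sgn_j)$ is constant. On this chart I may assume $\chi\equiv 1$. Writing $(p_1,\dots,p_n)$ for the dual cotangent coordinates, substitution in the definition of $h^0$ collapses everything into
\[
  h^0(p,q) \;=\; \varphi(a) + \sum_{j=1}^n \sgn_j\bigl(q_j+ip_j\bigr)^2.
\]
Assertion (1) then follows at once: the only critical point in the chart is $(p,q)=(0,a)$, and a suitable complex-linear change of coordinates $z_j = \epsilon_j(q_j\pm ip_j)$, with $\epsilon_j^2 = \sgn_j$ and the inside sign fixed to make $\omega = \sum_j dp_j\wedge dq_j$ a positive $(1,1)$-form in each complex line, puts $h^0$ in the complex Morse normal form of Definition~\ref{d:slf}.

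For (2), the same chart computation yields
\[
  \hnu\cdot f^0 \;=\; 4\bigl(|p|^2+|q|^2\bigr),
\]
which is strictly positive on the chart except at $(0,a)$. Off the union $U$ of the (finitely many) Morse charts, $\nu\cdot\varphi$ is bounded below by a positive constant, $\hnu$ projects to $\nu$ on the zero-section, and $f^0 = \varphi + O(|p|^2)$, so $\hnu\cdot f^0 = \nu\cdot\varphi + O(|p|^2)$ uniformly on $W_1\setminus U$; shrinking $\delta$ makes the right-hand side positive on $W_\delta\setminus U$, and together with the chart calculation this establishes the Lyapunov property.

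For (3), I decompose $\cnu = \hnu - \alpha\,\dvf\lambda$, where $\alpha$ is the unique scalar making $\cnu$ tangent to every $V_r$. A direct assembly in Morse coordinates gives, on $V_r$,
\[
  \cnu\cdot f_r^0 \;=\; 4|q|^2 + 4r^2 - \frac{4\bigl(\sum_j\sgn_j p_j^2\bigr)^2}{r^2} \;\ge\; 0,
\]
with equality exactly when $q=a$ and $p$ lies in one of the extremal subspaces $P_a^\pm\subset\sT_aM$ of Lemma~\ref{l:hnu}, that is, on $C_a^-\cup C_a^+$. Expanding the right-hand side near $C_a^-$ with $p=p^-+p^+$ and $p^+$ small yields $\cnu\cdot f_r^0 = 16|p^+|^2 + 4|q|^2 + O(|p^+|^4)$, and a symmetric expansion holds near $C_a^+$. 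Combined with the observation that the intrinsic Hessian of $f_r^0$ on $V_r$ is block-diagonal in the normal bundle to $C_a^\pm$ (indefinite $q$-block of signature $(n-k,k)$ plus a definite $p^\perp$-block), this yields the Morse--Bott non-degeneracy, so that $f_r^0$ is a Morse--Bott Lyapunov function for $\cnu$. Off the Morse charts the argument of (2) applies uniformly in $r\in(0,\delta]$ after possibly shrinking $\delta$ once more. The main delicacy is matching the quadratic part of $h^0$ with the normal form of Definition~\ref{d:slf} in complex coordinates where $\omega$ is positive $(1,1)$; the remaining steps are finite polynomial manipulations.
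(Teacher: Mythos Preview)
Your proof is correct and follows essentially the same approach as the paper: explicit computation in Morse coordinates near each critical point (yielding the same formulas for $h^0$, $\hnu\cdot f^0$, and $\cnu\cdot f_r^0$), combined with a compactness/$C^1$-closeness argument on the complement where the second-order term in $f^0$ is $O(|p|^2)$. One minor caveat on item~(1): no choice of sign and $\epsilon_j$ in $z_j=\epsilon_j(q_j\pm ip_j)$ simultaneously yields $h^0=\varphi(a)+\sum z_j^2$ \emph{and} makes $\omega$ a positive $(1,1)$-form (the minus sign gives $\overline{h^0}$ rather than $h^0$), but this is a global sign-convention artifact already present in the paper's own choice $z_j=p_j+iq_j$ and does not affect the substance of your argument.
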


The geometric meaning of property 2 is that, in $W_\delta$, the fibers of $h^0$
are symplectic submanifolds away from $\Crit$. Indeed, since $-\hnu$ is the
Hamiltonian field of $g = \im h^0$, the condition $\hnu \cdot f^0 \ne 0$ implies
that $\df^0$ and $\dg$ are independent and that $\Ker \dh^0 = \Ker \df^0 \cap
\Ker \dg$ is a hyperplane of $\Ker \dg$ transverse to $\hnu$, hence a symplectic
subspace.

On the other hand, it follows from property 3 that the critical submanifolds
of each function $f_r^0$, $0 < r \le \delta$, are the components of the zero-set
of $\cnu$. Near each critical point $a$ of $\varphi$, since the chosen metric is
the standard Euclidean metric in Morse coordinates $(q_1 \etc q_n)$ centered on
$a$, we have
\[ \nu(q) = \nabla\varphi(q) = - 2q_1 \del_{q_1} -\dots- 2q_k \del_{q_k} 
	+ 2q_{k+1} \del_{q_{k+1}} +\dots+ 2q_n \del_{q_n}. \]
Thus, all positive (resp.\ negative) eigenvalues of $\Lnu_a$ are equal, and the
components of the zero-set of $\cnu$ in $\sST_aM$ are the two spheres $C_a^\pm$ 
defined in Lemma \ref{l:cnu}-b. The corresponding critical values of $f_r^0$ are 
$f_r^0(C_a^\pm) = \varphi(a) \pm 2r^2$, $0 < r \le \delta$.

\begin{proof}[Proof of the lemma]
The reason for adding $-\frac12 \nabla^2\varphi$ to $\varphi$ in the real part
$f^0$ of $h^0$ is that, at every critical point $a$ of $\varphi$, the gradient
of the quadratic form $-\frac12 \nabla^2\varphi_a$ on $\sT_aM$ for the Euclidean
metric is the linear vector field $-\Lnu_a^\top = \hnu \rst{ \sT_aM }$. Then, by
orthogonal projection to any sphere $S_{a,r} \de \sT_aM \cap V_r$, the gradient
of the Morse--Bott function $f^0 \rst{ S_{a,r} }$ for the round metric  is $\cnu
\rst{ S_{a,r} }$.

Furthermore, the function $(p,q) \mapsto \nabla^2\varphi_q (p,p)$ vanishes
identically together with its $1$--jet along the zero-section $M \subset \sT M$,
so $f^0$ and (the pullback of) $\varphi$ are arbitrarily $\cC^1$--close near $M$.
As a result, for any open subset $U$ of $M$ containing $\Crit$, the positivity
of $\nu \cdot \varphi$ on the compact set $M-U$ implies that $\hnu \cdot f^0$ 
and $\cnu \cdot f^0$ (where $\cnu$ is viewed as a vector field on $\sT M - M$
tangent to the hypersurfaces $V_r$) are both positive on $W_{\delta_0} \cap \sT
(M-U)$ for some $\delta_0 > 0$.

To complete the proof, we study $h^0$ more carefully near the critical points of
$\varphi$. For $a \in \Crit$, let $(q_1 \etc q_n)$ be coordinates centered at
$a$, on a neighborhood $U_a$, in which the metric is Euclidean and
\[ \varphi(q) = \varphi(0) + \sum_1^n \sgn_j q_j^2, \quad
   \sgn_j \in \{-1,+1\}. \]
In the associated cotangent coordinates, we have:
\begin{align*}
   \nu(q) &= 2 \sum_1^n \sgn_j q_j \del_{q_j}, \\
   \nabla^2\varphi_q(p,p) &= 2 \sum_1^n \sgn_j p_j^2, \\
   \hnu(p,q) &= 2 \sum_1^n \sgn_j (q_j \del_{q_j} - p_j \del_{p_j}), \\
   g(p,q) &= 2 \sum_1^n \sgn_j p_jq_j.
\end{align*}
Thus, in the complex coordinates $z_j = p_j+iq_j$ on $\sT U_a$, $1 \le j \le n$,
the map $h^0$ is given by
\[ h^0(z) = \varphi(0) + \sum_1^n \sgn_j z_j^2. \]
Hence, $a$ is the only critical point of $h^0$ in $\sT U_a$ and it is of complex
Morse type, which proves 1. Furthermore,
\[ (\hnu \cdot f^0)(z) = 4 \sum_1^n |z_j|^2, \]
which proves 2.

Finally, the contact lift $\cnu$, viewed again as a vector field on $\sT M - M$,
has the form $\cnu = \hnu - \ups \dvf\lambda$, where the function $\ups$ can be
computed by evaluating the differential of the kinetic energy $\rho$ (which is
zero on $\cnu$). In $\sT U_a - U_a$, we have $\rho = \frac12 \sum_1^n p_j^2$,
and so 
\[ \ups(p,q) = - \frac{ \sum_1^n \sgn_j p_j^2 }{ \sum_1^n p_j^2 }. \]
As a result, again in $\sT U_a - U_a$,
\begin{align*}
	(\cnu \cdot f^0) (p,q) &= 4 \sum_1^n q_j^2 + 4 \frac
	{ \lrp{ \sum_1^n p_j^2 }^2 - \lrp{ \sum_1^n \sgn_j p_j^2 }^2 }
   { \sum_1^n p_j^2 } \\
	&= 4 \sum_1^n q_j^2 + 4 \frac
   { \lrp{\sum_1^n (1+\sgn_j) p_j^2} \cdot \lrp{\sum_1^n (1-\sgn_j) p_j^2} }
   { \sum_1^n p_j^2 }.
\end{align*}
This function is non-negative and vanishes exactly (and identically) along the
spheres $C_a^\pm$. The transverse non-degeneracy of these critical submanifolds
is clear in the $q$--directions and, in the $p$--directions (lying in the fiber
$\sT_aM$), it follows from the observation that every non-degenerate quadratic
form on a Euclidean space restricts to a Morse--Bott function on any sphere.
This concludes the proof of 3.
\end{proof}

\begin{remark}[properties of $\ups$] \label{r:upsilon}
We quickly work out here a few properties of the function $\ups$ which we will
need to prove Proposition \ref{p:f}. For any point $a \in \Crit$, in the same
complex coordinates $z_j = p_j+iq_j$ as before, $1 \le j \le n$, the spheres
$C_a^\pm \subset \sST_aM$ are given by 
\[ C_a^\pm = \{p \in \S^{n-1} \simeq \sST_aM \with
	p_j=0\ \text{if}\ \sgn_j = \pm1\} \subset \sST_aM = \{q=0\}. \] 
Then the explicit expression of $\ups$ yields $\ups \rst{ C_a^\pm } = \pm2$. 
Furthermore, a little calculation (similar to the previous ones) shows that, on
$\sST_aM$,
\[ (\cnu \cdot \ups) (z) = 8 \frac
   { \lrp{\sum_1^n (1+\sgn_j) p_j^2} \cdot \lrp{\sum_1^n (1-\sgn_j) p_j^2} }
	{ \lrp{ \sum_1^n p_j^2 }^2 }. \]
This function is clearly non-negative and vanishes exactly on the zero-set of 
$\cnu \rst{ \sST_aM }$, namely $C_a^- \cup C_a^+$. 
\end{remark}

\section{Rearrangement of critical values}

We recall that a Morse function $f$ is said to be \emph{ordered} if the order of
critical values is consistent with that of indices: $\ind(a) < \ind(b)$ implies
$f(a) < f(b)$ for any two critical points $a, b$. It is well-known that, if the
gradient of $f$ (for some metric) satisfies the Morse--Smale condition, then
one can deform $f$ among Morse functions with the same gradient (for different
metrics) to an ordered Morse function \cite[Theorem B]{Sm}. This rearrangement
process, applied to the Morse--Bott functions $f_r^0 \on V_r \simeq \sST M \to
\R$ of Lemma \ref{l:h0} (with $r \le \delta$), is the key trick we need for
constructing our symplectic Lefschetz fibration.

\begin{proposition}[rearrangement process] \label{p:f}
Let $M$ be a compact manifold, $\varphi \on M \to \R$ a Morse function, and
$\nu$ an adapted gradient of $\varphi$ satisfying the Morse--Smale condition.
Then the contact vector field $\cnu$ admits a family of Lyapunov Morse--Bott
functions $f_r \on \sST M \to \R$ $(r>0)$ with the following properties\tu:
\begin{enumerate}
\itemup{1}
for $r$ sufficiently small, $f_r = f_r^0 \on V_r \cong \sST M \to \R$\tu;
\itemup{2}
for $r$ sufficiently large, the function $f_\infty \de f_r/r \on \sST M \to \R$
is independent of $r$ and vanishes transversely\tu;
\itemup{3}
for any $r>0$, the function $\pm\del_rf_r$ is non-negative near $\del^\pm\sST M$
while it is positive on $C_a^\pm$ for every $a \in \Crit$\tu;
\itemup{4}
the function $f \on \sT M \to \R$ given by $f \rst{ V_r } \de f_r$ and $f \rst M
\de \varphi$ is a smooth Lyapunov function for the Hamiltonian field $\hnu$\tu;
\itemup{5}
the function $f \on \sT M \to \R$ is invariant under the fiberwise antipodal
involution.
\end{enumerate}
\end{proposition}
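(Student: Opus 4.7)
The plan is to split the construction into two pieces: an ``asymptotic'' Morse--Bott Lyapunov function $f_\infty \colon \sST M \to \R$ for the contact lift $\cnu$, produced via Smale's rearrangement theorem, and a one-parameter interpolation $f_r$ between the canonical family $f_r^0$ of Lemma \ref{l:h0} (valid for small $r$) and $r f_\infty$ (valid for large $r$).

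For $f_\infty$, the Morse--Smale hypothesis on $\nu$ translates, via Lemma \ref{l:cnu}(b) and the disjointness of sphere conormal bundles noted at the end of Section \ref{s:lifts}, into a Morse--Smale structure for $\cnu$: inside $\sSN(\nu)$ the spheres $C_a^\pm$ are hyperbolic with transversely intersecting invariant manifolds, while in the normal direction $\cnu$ is uniformly attracting/repelling along $C_a^\pm$. I would apply the Morse--Bott version of Smale's theorem \cite{Sm} to produce an antipodally invariant Morse--Bott Lyapunov function $f_\infty$ for $\cnu$ on $\sST M$, with critical locus $\bigcup_{a \in \Crit}(C_a^- \cup C_a^+)$, prescribed values $v_a^\pm$ respecting the dynamical partial order and satisfying $\pm v_a^\pm > 0$, and with $0$ as a regular value cut transversely. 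When $\del M \ne \emptyset$ the values are further chosen so that $\pm f_\infty \ge 0$ near $\del^\pm \sST M$.

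For the interpolation, pick a smooth cutoff $\psi \colon \R_{>0} \to [0,1]$ with $\psi \equiv 0$ on $(0,r_0]$ and $\psi \equiv 1$ on $[r_1,\infty)$, and set
\[ f_r \de \bigl(1-\psi(r)\bigr) f_r^0 + \psi(r)\, r\, f_\infty. \]
Then (1), (2) and (5) are immediate, antipodal invariance passing to convex combinations. At fixed $r$, both $f_r^0$ and $r f_\infty$ are Morse--Bott Lyapunov for $\cnu$ with the \emph{same} critical locus $\bigcup C_a^\pm$ and matching signs of transverse Hessian (dictated by $\cnu$'s common attracting/repelling behavior), so the convex combination $f_r$ inherits these properties. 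The value $f_r(C_a^\pm) = \bigl(1-\psi(r)\bigr)(\varphi(a) \pm 2r^2) + \psi(r)\, r\, v_a^\pm$ becomes strictly monotone of the correct sign, as required by (3), once $|v_a^\pm|$ is chosen large enough relative to $\max |\psi'|$. For (4), use $\hnu = \cnu + \ups \cdot \dvf\lambda$ on $\sT M - M$ with $\dvf\lambda = r\del_r$ under the splitting $\sT M - M \cong \sST M \times \R_{>0}$, whence
\[ \hnu \cdot f = \cnu \cdot f_r + r\,\ups\,\del_r f_r; \]
the first term is non-negative with zero locus $\bigcup C_a^\pm$, and there $\ups = \pm 2$ by Remark \ref{r:upsilon}, so (3) supplies $\pm r\,\del_r f_r > 0$ and the second term makes up the difference. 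Smoothness across $M$ comes for free since $f$ agrees with the smooth ambient function $f^0$ on $W_\delta$.

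The main obstacle is the \emph{parametrized} rearrangement: the critical values $\varphi(a) \pm 2r^2$ of $f_r^0$ are clustered near $\varphi(\Crit)$ for small $r$ and are not in Smale order, whereas the target values $r v_a^\pm$ diverge and are globally ordered. Transporting from one regime to the other must preserve the Morse--Bott Lyapunov condition at every intermediate $r$ \emph{and} the strict monotonicity (3) of each critical value simultaneously. The observations that transverse Hessians of $f_r^0$ and $f_\infty$ have matching signs along $\bigcup C_a^\pm$, so convex combinations never become degenerate in the Lyapunov direction, and that monotonicity on $C_a^\pm$ can be forced by scaling $|v_a^\pm|$, are what make the direct interpolation work.
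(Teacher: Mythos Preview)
Your overall architecture matches the paper's, but the verification of property~(4) has a genuine gap. The identity $\hnu \cdot f = \cnu \cdot f_r + r\,\ups\,\del_r f_r$ is correct, and you rightly observe that the second term is positive on the zero-set $\bigcup C_a^\pm$ of the first. However, you have not controlled the second term \emph{away} from those spheres, where $\ups$ has no definite sign and $r\,\del_r f_r$ can be large. Concretely, for $r$ large (where $f_r = r f_\infty$) one gets $\hnu \cdot f = r\bigl((\cnu \cdot f_\infty) + \ups f_\infty\bigr)$, so positivity demands $(\cnu \cdot f_\infty) + \ups f_\infty > 0$ \emph{everywhere} on $\sST M$. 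This is not automatic for a Lyapunov function produced by Smale's rearrangement: it says that $f_\infty$ must grow fast enough along $\cnu$ to beat the twisting term $\ups f_\infty$, and it is precisely the content of the paper's Lemma~\ref{l:slope}~(ii). The paper secures it by first choosing the zero hypersurface $K = \{f_\infty = 0\}$ judiciously (so that in addition $\ups f_\infty \ge 0$ near the critical fibers, condition~(iii)), and then forcing $\cnu \cdot \log|f_\infty|$ to be large enough on either side of $K$; this is where the construction goes beyond a generic application of \cite{Sm}.

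A related issue affects your interpolation in the transition zone: with a single convex combination $(1-\psi(r)) f_r^0 + \psi(r)\,r f_\infty$, the derivative $r\,\ups\,\del_r f_r$ picks up a term $r\,\ups\,\psi'(r)\bigl(r f_\infty - f_r^0\bigr)$ of uncontrolled sign, which can again swamp $\cnu \cdot f_r$. The paper avoids this by using an \emph{additive} interpolation $\tau_0(r)\,f_r^0 + \tau_1(r)\,r f_\infty$ where $\tau_0'$ and $\tau_1'$ have disjoint supports, $\tau_1$ rises to a large constant $C$ on a very short interval $[\eps/2,\eps]$, and $\tau_0$ drops from $1$ to $0$ on the later interval $[\delta/2,\delta]$; the three regions are then handled separately using conditions~(ii) and~(iii) on $f_\infty$ together with the freedom in $C$ and $\eps$. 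Your convex interpolation does not obviously afford this separation.
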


\begin{proof}
The key remark is that, for any $a, b \in \Crit$, no trajectories of $\cnu$ go
from $C_a^+$ to $C_b^-$. Indeed, by Lemma \ref{l:cnu}, the stable manifold of
$C_b^-$ for $\cnu$ is the sphere conormal bundle of the stable manifold $E^-(b)$
of $b$ for $\nu$ while the unstable manifold of $C_a^+$ for $\cnu$ is the sphere
conormal bundle of the unstable manifold $E^+(a)$ of $a$ for $\nu$. But since
$\nu$ satisfies the Morse--Smale condition, the submanifolds $E^+(a)$ and $E^-
(b)$ are transverse to each other, and hence their sphere conormal bundles do
not intersect. The next lemma is a direct consequence of this remark. We define
\[ A^\pm \de \del^\pm \sST M \cup \bigcup_{a \in \Crit} E^\pm(C_a^\pm) \]
where $E^-$ and $E^+$ denote here the stable and unstable manifolds for $\cnu$. 
We also recall that the vector fields $\hnu$ and $\cnu$ on $\sT M - M$ satisfy a
relation of the form 
\[ \cnu = \hnu - \ups \dvf\lambda, \]
where $\ups = \hnu \cdot \log r$ is a function independent of $r$ (namely, the
pullback of some function on $\sST M$).

\begin{lemma}[slope function] \label{l:slope}
The contact vector field $\cnu$ admits a Lyapunov function $f_\infty \on \sST
M \to \R$ such that\tu:
\begin{enumerate}
\itemup{i}
$f_\infty$ is negative on $A^-$ and positive on $A^+$\tu; 
\itemup{ii}
$(\cnu \cdot f_\infty) + \ups f_\infty > 0$ everywhere on $\sST M$\tu;
\itemup{iii}
$\ups f_\infty \ge 0$ in $\sST U$ for some neighborhood $U$ of $\Crit$ in $M$.
\end{enumerate}
\end{lemma}

\begin{proof}[Proof of the lemma]
Since the sets $A^\pm$ defined above are disjoint they possess (small, and hence
disjoint) neighborhoods $H^\pm$ whose boundaries are smooth hypersurfaces 
transverse to $\cnu$ and which retract onto $A^\pm$ along the flow lines of
$\cnu$. Concretely, $H^\pm$ is obtained from a collar neighborhood of $\del^\pm
\sST M$ by successive attachments of Morse--Bott handles, following the order
of critical values for $H^-$ and the inverse order for $H^+$. Then $\del H^\pm$
contains $\del^\pm \sST M$, and we set
\[ K^\pm \de \del H^\pm - \del^\pm \sST M. \]
With these notations, the closure of $\sST M - (H^- \cup H^+)$ is a cobordism in
which all orbits of $\cnu$ go from $K^-$ to $K^+$, so it is a product. Let $K
\subset \sST M - (H^- \cup H^+)$ be a closed hypersurface transverse to $\cnu$.
For any $a \in \Crit$, every orbit of $\cnu$ in $\sT_aM - (C_a^- \cup C_a^+)$
meets $K$ in one point and contains also a unique zero of the function $\ups$,
with $\cnu \cdot \ups > 0$ at that point (see Remark \ref{r:upsilon}). Hence we
can slide $K$ along the orbits of $\cnu$ to make it agree with $\{\ups=0\}$ in
$\sST U$ for some neighborhood $U$ of $\Crit$ in $M$. Now, using the handlebody
structure of $H^\pm$, it is not hard to construct a function $f_\infty \on
\sST M \to \R$ with the following properties:
\begin{itemize}
\item
$f_\infty$ is a Lyapunov function for $\cnu$ and its zero set is $K$, so it is
negative on the component $V^-$ of $\sST M - K$ containing $H^-$ and positive on
the other component $V^+$ containing $H^+$; 
\item
$f_\infty$ is so increasing along $\cnu$ that $\cnu \cdot \log|f_\infty| >
-\ups$ in $V^+$ and $\cnu \cdot \log|f_\infty| < -\ups$ in $V^-$.
\end{itemize}
The construction of a function satisfying the first property is standard in
Morse theory \cite{Sm}, and its generalization to the Morse--Bott case is
straightforward. The second property is a minor add-on to the first. It holds
trivially on the zero-set of $\cnu$ because $\ups$ is negative on the spheres
$C_a^-$ and positive on the spheres $C_a^+$; it can be achieved globally because
the function $\log|f_\infty|$ is not bounded below (it tends to $-\infty$ on
both sides of~$K$). In practice, the germ of $f_\infty$ along $K$ can be chosen
\emph{a priori} with $f_\infty \rst K = 0$ and $(\cnu \cdot f_\infty) \rst K >
0$, and its extensions over $V^-$ and $V^+$ can then be performed independently.
The properties stated in the lemma are direct consequences of the above ones. In
particular, since $\ups$ and $f_\infty$ have the same sign at every point of
$\sST U$, their product is non-negative.
\end{proof}

We now return to the proof of the proposition. Before defining the family $f_r$,
we note that the function $f^1 \on \sT M - M \to \R$ given by $f^1 \rst{ V_r } 
= rf_\infty$ satisfies
\[ \hnu \cdot f^1 =
	r \bigl((\cnu \cdot f_\infty) + \ups f_\infty\bigr) > 0. \]
The family $f_r$ will take the shape
\[ f_r \de \tau_0(r)\, f_r^0 + \tau_1(r)\, f_r^1, \]
where $f_r^1 = rf_\infty$ and $\tau_0, \tau_1 \on \R_{>0} \to \R_{\ge0}$ have
the following properties:
\begin{itemize}
\item
$\tau_0(r) = 1$ for $r \le \delta/2$ and $\tau_0(r) = 0$ for $r \ge \delta$;
\item
$\tau'_0(r) \le 0$ and $|\tau'_0(r)| \le 3/\delta$ for all $r>0$; 
\item
$\tau_1(r) = 0$ for $r \le \eps/2$ and $\tau_1(r) = C$ for $r \ge \eps$, with
$C \gg 0$ and $0 < \eps \ll \delta/2$;
\item
$\tau'_1(r) \ge 0$ and $|\tau'_1(r)| < 3C / \eps$ for all $r>0$.
\end{itemize}
(In particular, the derivatives $\tau'_0$ and $\tau'_1$ have disjoint supports.) 

The functions $f_r$ are clearly Lyapunov functions for $\cnu$, and they satisfy
properties 1 and 2. As for property 3, a simple calculation shows that it
holds provided $C$ (the value of $\tau_1(r)$ for $r \ge \eps$) is sufficiently
large. To check property 4 --- saying that the function $f \on \sT M \to \R$
defined by $f \rst{ V_r } = f_r$ is a Lyapunov function for $\hnu$---, we write
\[ \hnu \cdot f =
	\tau_0(r)\, (\hnu \cdot f^0) + \tau_1(r)\, (\hnu \cdot f^1)  
	+ r \tau'_0(r)\, \ups f_r^0 + r^2 \tau'_1(r)\, \ups f_\infty. \]
In the domain $\{r \ge \delta\}$, this quantitiy is positive because $f$ equals
$f^1$. In the region $\{\delta/2 \le r \le \delta\}$, the function $\tau_1$ is
constant equal to $C$, and
\[ C\, (\hnu \cdot f^1) + r \tau'_0(r)\, \ups f_r^0 
	= r \left(C\, \bigl((\cnu \cdot f_\infty) + \ups f_\infty\bigr) 
	+ \tau'_0(rs)\, \ups f_r^0\right) \]
is positive as soon as $C$ is sufficiently large. Finally, in the region $\{r
\le \delta/2\}$, the function $\tau_0$ is constant equal to $1$, so
\[ \hnu \cdot f = (\hnu \cdot f^0)
	+ \tau_1(r)\, (\hnu \cdot f^1) + r^2 \tau'_1(r)\, \ups f_\infty. \]
In the neighborhood $\sST U$ of the cotangent fibers over critical points, this
derivative is positive because the first two terms of the right-hand side are
positive while the third term is non-negative (see Lemma \ref{l:slope}). Outside
$\sST U$, the first term ($\hnu \cdot f^0$) is bounded below by a constant
$\kappa>0$ and the second term ($\tau_1(r) (\hnu \cdot f^1)$) is non-negative.
As for the last term ($r^2\tau'_1(r) \ups f_\infty$), since $r^2 |\tau'_1(r)|
\le 3C\eps$, it can be made smaller than $\kappa$ by taking $\eps$ sufficiently
small. 

It remains to prove property 5, \emph{i.e.}, that $f$ can be chosen invariant 
under the action of the fiberwise antipodal involution. First we note that the
vector field $\cnu$ and the functions $f^0_r$, $r \le \delta$, are invariant. To
arrange that all functions $f_r$ are invariant, we apply the same construction
as above but we work on the projective cotangent bundle instead of the sphere
cotangent bundle, and then we lift the functions we obtain to $\sST M$.
\end{proof}

\begin{proof}[Proofs of Theorems \ref{t:closed} and \ref{t:compact}]
We apply Proposition \ref{p:g} with the function $f$ provided by Proposition
\ref{p:f}.
\end{proof} 

\section{The Lefschetz fiber} \label{s:lf}

This section collects various informations about the topology and the symplectic
geometry of the Lefschetz fiber of the upgraded function $(\varphi,\nu)$, namely
the regular fiber of our symplectic Lefschetz fibration extending $\varphi$.   

\begin{proposition}[the Weinstein structures of real fibers] \label{p:w}
Let $M$ be a compact manifold, $\varphi \on M \to \R$ a Morse function, $\nu$ an
adapted gradient of $\varphi$ satisfying the Morse--Smale condition, and $h \on
\sT M \to M$ the symplectic Lefschetz fibration given by Propositions \tref{p:g}
and \tref{p:f}. Then every fiber $F_u = h^{-1}(u)$, $u \in \R - \Discr$, has a
Weinstein structure induced by the canonical $1$--form $\lambda$ of $\sT M$, a
Lyapunov function for the corresponding Liouville field being the Morse--Bott
function $\rho_u \de \rho \rst{ F_u }$, where $\rho$ is the kinetic energy of
the underlying metric. Moreover, these Weinstein structures belong to the same
homotopy class. 
\end{proposition}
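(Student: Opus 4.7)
\emph{Proof proposal.} The proposition splits into two claims: (a) each fiber $F_u$, $u \in \R - \Discr$, is a Weinstein manifold with $\rho_u$ a Lyapunov function for the induced Liouville field $\dvf\lambda_u$; and (b) the homotopy class of this Weinstein structure is independent of $u$.

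For (a), I would first pin down the critical locus of $\rho_u$. The zero section $M$ meets $F_u$ transversely along $Q_u = \{\varphi = u\}$, which is manifestly the absolute minimum locus of $\rho_u$ and is transversely non-degenerate. Near each critical point $a$ of $\varphi$, the complex quadratic coordinates of Example~\ref{x:locmod} identify the ambient data with $(\C^n, \lambda_k, \rho_k)$ and present the local piece of $F_u$ as the smooth affine quadric $\{\sum \sgn_j z_j^2 = u - \varphi(a)\}$; the computation of Example~\ref{x:locmod}-b then directly yields the Morse--Bott structure of $\rho_{k, u - \varphi(a)}$, with vanishing-cycle components cleanly attached to $Q_u$. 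To see that $\rho_u$ has no other critical points, and to verify the Lyapunov property $\dvf\lambda_u \cdot \rho_u > 0$ off this critical locus, I would decompose the ambient Liouville field along $F_u$ as $\dvf\lambda = \dvf\lambda_u + Y$ with $Y$ in the $2$-dimensional symplectic orthogonal $\zeta = (\Ker \dh)^\omega$, pin $Y$ down using $\dh(\dvf\lambda) = h$ (valid near infinity by $1$-homogeneity of $f$ and $g$), and then use $\dvf\lambda \cdot \rho = 2\rho$ (since $\dvf\lambda$ is the gradient of $\rho$ for the Sasaki metric) to obtain $\dvf\lambda_u \cdot \rho = 2\rho - Y \cdot \rho$. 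Near infinity the $2\rho$ term dominates; in the bounded region the strict positivity $\hnu \cdot f > 0$ from Proposition~\ref{p:f} together with the explicit shape of the decomposition control the correction. Completeness of $\dvf\lambda_u$ and an exhaustion of $F_u$ by Liouville domains are then immediate from Axiom~(3) of Definition~\ref{d:slf}.

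For (b), given two regular real values $u, u' \in \R - \Discr$, pick a smooth path $\gamma : [0,1] \to \C - \Discr$ from $u$ to $u'$; by Axiom~(2) of Definition~\ref{d:slf} the symplectic parallel transport $\tau_s : F_u \to F_{\gamma(s)}$ along $\gamma|_{[0,s]}$ is complete and is an exact symplectomorphism, so the pullbacks $\lambda_s \de \tau_s^* \lambda_{\gamma(s)}$ and $\rho_s \de \tau_s^* \rho_{\gamma(s)}$ form a continuous $1$-parameter family of Weinstein structures on $(F_u, \omega_u)$ interpolating between $(\lambda_u, \rho_u)$ and $(\tau_\gamma^* \lambda_{u'}, \tau_\gamma^* \rho_{u'})$; independence of the choice of $\gamma$ follows from the path-connectedness of $\C - \Discr$. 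The main obstacle will be in part (a): the Lyapunov verification in the bounded middle region of $F_u$, where neither the quadratic local model nor near-infinity homogeneity applies directly, forces a careful control of $Y \cdot \rho$ using the estimates built into Proposition~\ref{p:f}; a transverse Morse--Bott check of $\rho_u$ along $Q_u$ outside $\Crit$ is also needed, using $g(p,q) = \prg{p, \nu(q)}$ together with $\nu \ne 0$ on $Q_u$.
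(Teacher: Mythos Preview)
Your approach to part (b) has a genuine gap. Parallel transport $\tau_s$ is an exact symplectomorphism, so $\tau_s^*\lambda_{\gamma(s)}$ is indeed a Liouville form on $F_u$; but for the family $(\lambda_s,\rho_s)$ to be a homotopy of \emph{Weinstein} structures you need $\rho_{\gamma(s)}$ to be a Lyapunov function for $\dvf\lambda_{\gamma(s)}$ on $F_{\gamma(s)}$ for every $s$, and part (a) only gives this when $\gamma(s)$ is \emph{real}. Any path joining real regular values in distinct components of $\R-\Discr$ must leave the real line, and for non-real $w$ you have no control: the contact-geometric argument the paper uses for real $u$ relies on the inclusion $F_u\subset\{g=0\}=\sN(\nu)$, which fails as soon as $\im w\ne0$. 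The paper handles this by an entirely different, local construction: near the single critical point $a$ with $u_0<\varphi(a)<u_1$ it replaces the fibers $F_w$ by perturbed hypersurfaces $F'_w$ (agreeing with $F_{\re w}$ outside a ball) and simultaneously perturbs $\lambda$ to a form $\lambda^\#$ via Lemma~\ref{l:sharp}, arranged so that the induced Liouville form on each $F'_w$ is non-singular in the transition annulus. This perturbation is not cosmetic: in the local model, $\lambda_k$ restricted to the annulus can have zeros which must be pushed away before the interpolation gives Weinstein structures.

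For part (a), your strategy is different from the paper's and, as you yourself flag, the ``bounded middle region'' is where it is incomplete. Your local-model computation only covers the portion of $F_u$ in the small tube $W_\delta$ where $h=h^0$; the critical spheres $C_a^\pm\subset F_u$ generally sit in $V_r$ for an $r$ determined by $f_r(C_a^\pm)=u$, which can be large, so the quadric model does not identify them. The paper avoids your decomposition $\dvf\lambda=\dvf\lambda_u+Y$ altogether: it works inside $Y\de\{g=0\}-M$, observes that critical points of $\rho_u$ away from $Q_u$ are (by Lagrange multipliers) exactly the critical points of the functions $f_r\rst Y$, and those are precisely the zeros of $\cnu$, namely the spheres $C_a^\pm$. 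The Lyapunov property is then read off from contact geometry: $\cnu$ spans the characteristic foliation of the dividing hypersurface $Y\cap V_r$ in $V_r$, and $(\cnu\cdot f_r)>0$ at every non-critical point of $\rho_u$ forces $F_u\cap V_r$ to be a positive contact submanifold of $V_r$. This sidesteps any estimate of $Y\cdot\rho$ and uses only the qualitative input $\cnu\cdot f_r>0$ from Proposition~\ref{p:f}, rather than the quantitative control you would need.
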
  

The last assertion of this proposition should be understood as follows: given any
embedded arc $I$ in $\C - \Discr$ joining two real regular values $u_0, u_1$,
the symplectic fibers $F_w$, $w \in I$, admit Weinstein structures which, for
$w \in \del I = \{u_0,u_1\}$, are the Weinstein structures induced by $\lambda$.

\begin{proof}
To check that $\lambda_u \de \lambda \rst{ F_u }$ is a Weinstein structure on
$F_u$, we first show that $\rho_u$ is a Morse--Bott function and then prove that
each level set $\{\rho_u = r^2/2\}$ is a positive contact submanifold of $V_r$
at any point which is non-critical for $\rho_u$.

Let $\sN(\nu)$ denote the hypersurface $\sN(\nu) \de \{g=0\} \subset \sT M$,
which is smooth away from $\Crit$ (the critical points of $g$ are the zeros of
its Hamiltonian field $\hnu$). The regular fiber $F_u$ and the zero-section $M$
both sit in $\sN(\nu)$, and inside $\sN(\nu)$, they intersect transversely along
the level set $Q_u = \{\varphi=u\}$. Hence $F_u \cap M$ is a transversely
non-degenerate minimum submanifold of $\rho_u$.

Set $Y \de \sN(\nu) - M$. Then $f \rst Y$ and $\rho \rst Y$ have no critical
points, the former because $\df(\hnu) > 0$ while $\dg(\hnu) = 0$, and the latter
because $\drho(\dvf\lambda) > 0$ while $\dg(\dvf\lambda) = 0$ along $Y$. It then
follows from the Lagrange multiplier theorem that the critical points of $\rho$
on the fibers of $h$ with real values (namely, the critical points of $\rho \rst
Y$ on the level sets of $f \rst Y$) coincide with the critical points of the
functions $f_r \rst Y$, $r>0$ (which are the critical points of  $f \rst Y$ on
the level sets of $\rho \rst Y$). Moreover, since $f_r$ is a Lyapunov function
for $\cnu$ and $Y$ is an invariant submanifold containing all zeros of $\cnu$,
the critical points of $f_r \rst Y$ are just the critical points of $f_r$, and
so they form transversely non-degenerate critical submanifolds. Thus, $\rho_u$ is
a Morse--Bott function.

\begin{remark}[critical submanifolds of $\rho_u$]
It is very instructive to precisely spot the critical submanifolds of $\rho_u$.
By Proposition \ref{p:f}, properties 2 and 3, for every point $a \in \Crit$,
the critical value $f_r(C_a^+)$ (resp.\ $f_r(C_a^-)$) is an increasing (resp.\
decreasing) and unbounded function of $r$. Therefore, given $u \in \R - \Discr$
and $a \in \Crit$, there exists a unique $r$ such that $f_r(C_a^+) = u$ (resp.\ 
$f_r(C_a^-) = u$) if $\varphi(a) < u$ (resp.\ $\varphi(a) > u$) and none
otherwise. The critical submanifolds of $\rho_u$, besides $Q_u \de F_u \cap M$,
are the spheres $C_a^\pm$ sitting in the corresponding $V_r$. This provides many
different presentations of the regular fiber as a Weinstein manifold.
\end{remark}

Now consider a point $(p,q) \in \{\rho_u = r^2/2\} = F_u \cap V_r$. If $(p,q)$
is non-critical for $\rho_u$, the above discussion ensures that $\cnu$ does not
vanish at $(p,q)$, and hence $(\cnu \cdot f_r) (p,q) > 0$. We then observe that
$\cnu$ spans the (contact geometric) characteristic foliation of $Y \cap V_r$ in
$V_r$ (indeed, $Y \cap V_r$ is the dividing hypersurface of the contact vector
field $\cnu$ on $V_r$). Therefore, the inequality $(\cnu \cdot f_r) (p,q) > 0$
implies that $F_u \cap V_r$, which is contained in $Y \cap V_r$, is transverse
to the characteristic foliation of $Y \cap V_r$ at $(p,q)$, and so is a contact
submanifold of $V_r$ at $(p,q)$.

It remains to show that the Weinstein structures of the real fibers lie in a
common homotopy class. Let $u_0, u_1 \in \R - \Discr$. If $u_0, u_1$ are in the
same component of $\R - \Discr$, clearly the Weinstein structures of $F_{u_0}$
and $F_{u_1}$ are homotopic through those of the regular fibers $F_u$, $u \in
[u_0,u_1]$. Therefore (assuming as usual that no two critical points have the
same value), it suffices to treat the case where there is exactly one point $a
\in \Crit$ such that $u_0 < \varphi(a) < u_1$. To simplify the notations, we set
$\varphi(a) = 0$. Since $u_0, u_1$ can be chosen arbitrarily small, the homotopy
only requires a local construction near $a$; we now describe the process.

The map $h$ resulting from Propositions \ref{p:g} and \ref{p:f} provides local
complex Darboux coordinates $(z_1 \etc z_n) \in \C^n$ centered on $a$ in which
$h$ is the model function $\sum_1^n z_j^2$ while $M$ is represented by $M_k$,
$\varphi$ by $\varphi_k$, and $\rho$ by $\rho_k$ where $k \de \ind_\varphi(a)$
(see Example~\ref{x:locmod} for the notations). We use these coordinates to
identify a neighborhood $D$ of $a$ with the ball $\{z \in \C^n \with |z| \le 3
\delta\}$ for some $\delta>0$. Next, we choose a function $\chi \on [0,3\delta]
\to [0,1]$ that equals $1$ on $[0,\delta]$ and $0$ on $[2\delta,3\delta]$. Then,
for any $w=u+iv \in \C$ with $v$ sufficiently small (with respect to $\delta$),
we let $F'_w$ denote the submanifold of $\sT M$ obtained as follows:
\begin{itemize}
\item
$F'_w \cap D$ is defined by the equation 
\[ \sum_1^n z_j^2 = u + i v \chi(|z|)\,; \]
\item
$F'_w - D \de F_u - D$.
\end{itemize}
Thus, $F'_u = F_u$ if $u$ is real. For $u>0$ sufficiently small and for $w \de
u e^{(1-t)i\pi}$, $t \in [0,1]$, the manifolds $F'_w$ connect $F_{-u}$ to $F_u$,
and we would expect the $1$--forms induced on them by $\lambda$ to provide the
desired homotopy of Weinstein structures. This is roughly correct, but a slight
perturbation of $\lambda$ is necessary beforehand.

\begin{lemma}[perturbation of $\lambda$] \label{l:sharp}
There exists a $1$--form $\lambda^\#$ on $\sT M$ such that\tu:
\begin{itemize}
\item
$\lambda^\#$ coincides with $\lambda$ outside $D$;
\item
$\lambda^\#$, inside $D$ viewed as a ball in $\C^n$, equals $\dc\rho^\#$ where
$\rho^\#$ is pseudoconvex and arbitrarily $\cC^2$ close to $\rho$\tu;
\item
for some $\eps>0$ depending on $\rho^\#$, the $1$--form induced by $\lambda^\#$
on each $F_w \cap D$, $|w| \le \eps$, is non-singular in $F_w \cap E$ where $E
\de \{\delta \le |z| \le 2\delta\}$.  
\end{itemize}
\end{lemma}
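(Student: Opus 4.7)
The plan is to take $\rho^\#$ to be $\rho_k$ plus a small strictly plurisubharmonic perturbation supported in the interior of $D$, chosen so that $\d\rho^\#|_{\rT F_w}$ does not vanish on $F_w \cap E$ for $|w|$ small. Using the complex coordinates furnished by $h$, I identify $D$ with the ball $\{|z| \le 3\delta\} \subset \C^n$ and $\rho$ with $\rho_k$, so that $\lambda = \dc\rho_k$ as in Example \ref{x:locmod}. Since $\rho_k$ is strictly plurisubharmonic with constant Levi form $\tfrac14 I$, any $\cC^2$--small perturbation remains strictly plurisubharmonic, and, once the perturbation is taken to vanish near $\del D$, the first two bullets of the lemma are immediate.

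First I would locate the zeros of $\lambda|_{F_w}$ in $E$. By Example \ref{x:locmod}-b, the critical locus of $\rho_{k,w} = \rho_k|_{F_w}$ is the union of two spherical pieces of the vanishing cycle, each lying in $\{|z| = \sqrt{|w|}\}$, together with $F_w \cap M_k$ (the last piece being empty unless $w$ is real). Choosing $\eps < \delta^2$ eliminates the spherical pieces from $E$, so for real $u$ with $|u| \le \eps$ the only critical points of $\rho|_{F_u}$ in $F_u \cap E$ lie on $F_u \cap M_k \cap E$, while for non-real $w$ with $|w| \le \eps$ the form $\lambda|_{F_w}$ is already non-singular on all of $F_w \cap E$.

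Then I would set
\[ \rho^\#(z) \de \rho_k(z) + \epsilon\, \chi_0(|z|) \cdot \tfrac12 |z|^2, \]
with $\chi_0 \on [0,\infty) \to [0,1]$ smooth, equal to $1$ on $[0, 5\delta/2]$ and $0$ on $[3\delta, \infty)$, so that on $\{|z| \le 5\delta/2\} \supset E$ we have $\d\rho^\# = \d\rho_k + \epsilon\, \d(\tfrac12 |z|^2)$. At a point $z_0 = (iy,x) \in F_u \cap M_k \cap E$, a direct computation using the constraint $\sum_j z_{0,j} v_j = 0$ defining $\rT_{z_0} F_u$ gives
\[ \d(\tfrac12|z|^2)_{z_0}(v) = 2\,\tu{Re}\Bigl(\sum_{j>k} x_j v_j\Bigr) \quad \text{for } v \in \rT_{z_0} F_u; \]
this linear form on $\rT_{z_0} F_u$ is identically zero only when $y = 0$ or $(x_{k+1},\dots,x_n) = 0$, both of which force $|z_0| = \sqrt{|u|} < \delta$, contradicting $z_0 \in E$. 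Combined with $\d\rho_k(z_0) = 0$, this yields $\d\rho^\#|_{\rT F_u}(z_0) \ne 0$ throughout $F_u \cap M_k \cap E$.

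The remaining points of $F_u \cap E$ lie off $M_k$. There $\d\rho_k|_{\rT F_u}$ is bounded below by a positive constant on the compact set $(F_u \cap \bar E) \setminus N$, for any fixed neighborhood $N$ of $M_k$, uniformly in $u$ for $|u|$ small (using smoothness of $F_u \cap \bar E$ in $u$, since $E$ avoids the singular point $0$ of $F_0$). Inside $N$, a two-scale estimate---shrinking $N$ so that by continuity $\d(\tfrac12|z|^2)|_{\rT F_u}$ stays close to its non-zero values on $F_u \cap M_k$ while $|\d\rho_k|_{\rT F_u}|$ remains smaller---gives non-vanishing of $\d\rho^\#|_{\rT F_u}$ for $\epsilon$ appropriately small. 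Openness of the non-vanishing condition on the compact set $F_w \cap \bar E$ then extends the conclusion from real $u$ to complex $w$ with $|w| \le \eps$, after possibly shrinking $\eps$. The main obstacle is the bookkeeping in this two-scale argument, where the width of $N$ and the size of $\epsilon$ must be chosen in the right order so that neither $\d\rho_k$ nor $\epsilon\, \d(\tfrac12|z|^2)$ can dominate and cancel the other on $\rT F_u$.
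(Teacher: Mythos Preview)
Your perturbation $\rho^\# = \rho_k + \tfrac{\epsilon}{2}\chi_0(|z|)\,|z|^2$ is, on the annulus $E$, exactly the paper's choice $\rho_k + \tfrac{c}{2}\sigma(|z|^2)$, and your computation that $\d(\tfrac12|z|^2)\rst{\rT F_u}$ is nonzero on $F_u \cap M_k \cap E$ is correct and useful. The difference lies in how the non-singularity on \emph{all} of $F_w \cap E$ is established, and this is where your argument has a real gap, not just bookkeeping.

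Your two-scale estimate controls only the \emph{magnitudes} of $\alpha \de \d\rho_k\rst{\rT F_u}$ and $\beta \de \d(\tfrac12|z|^2)\rst{\rT F_u}$, and then invokes the triangle inequality. But this cannot rule out cancellation. Concretely: to have $|\alpha| < \tfrac{\epsilon}{2}\min_N|\beta|$ on $N$ you must shrink $N$ with $\epsilon$, and then the complementary lower bound $c_1 \de \min_{E\setminus N}|\alpha|$ is itself of order $\epsilon$ (since $\alpha$ vanishes on $M_k$); the off-$N$ requirement $\epsilon < c_1/\max|\beta|$ then reduces to $\min|\beta| > 2\max|\beta|$, which is false. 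The underlying phenomenon is that $F_u\cap M_k$ is a Morse--Bott critical manifold of $\rho_k\rst{F_u}$: a small perturbation does not make its critical points vanish, it moves them off $M_k$ to nearby points of $F_{w}\cap E$. What you must show is that those displaced critical points lie on fibers $F_w$ with $w\ne 0$ (in fact $\im w \ne 0$), and no amount of magnitude-only estimation will see this.

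The paper avoids this entirely by a direct algebraic computation: it writes down the system of equations (in the coordinates $x_j,y_j$, with the single parameter $c$) characterizing the singular set $K \subset E$ of $\dc\rho^\#$ on the fibers, and then checks by a short case analysis that $h$ does not vanish on $K$ --- indeed $\im h \ne 0$ on the generic stratum. Compactness of $K$ then yields the required $\eps$. Your direct verification on $M_k$ is one of the easy cases in that analysis; the substantive case is the one where all coordinate blocks are nonzero, and it is there that the relation $\mu^2 = (1+c)/c$ forces $\im h \ne 0$.
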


With this lemma, we can complete the proof of the proposition as planned. First,
inside $D$, we write $\lambda = \lambda_k = \dc\rho_k = \dc\rho$ (see Example
\ref{x:locmod}-b). Then we invoke the lemma, taking $\rho^\# - \rho$ so small
that $\rho^\#$ is pseudoconvex in $D$. Thus, on each intersection $F_w \cap D$,
the form $\lambda^\#$ induces a Liouville form $\lambda^\#_w$ whose Liouville
field $\dvf\lambda^\#_w$ is gradientlike for the restriction of $\rho^\#$.
Moreover,
according to the third point of the lemma, $\dvf\lambda^\#_w$ is non-singular in
the compact region $F_w \cap E$.

Take $w = u+iv \in \C$ with $|w| \le \eps$ and pick any point $z \in F'_w \cap
E$. Then $z$ also belongs to $F_{w'} \cap E$ where $w' = u + iv \chi(|z|)$ and
the tangent spaces of $F_{w'}$ and $F'_w$ at $z$ are respectively defined by  
\begin{align*}
   0 &= \sum_1^n z_j \dz_j, \\
   0 &= \sum_1^n z_j \dz_j - iv \chi'(|z|) \d(|z|).
\end{align*}
For $\eps$ sufficiently small, these spaces are so close (uniformly in $z \in E
\cap \bigcup_{|w| \le \eps} F'_w$) that the $1$--form $\lambda'_w$ induced by 
$\lambda^\#$ on each $F'_w$ with $|w| \le \eps$ is a non-singular Liouville
form whose Liouville field $\dvf\lambda'_w$ is gradientlike for the restriction  
of $\rho^\#$. As a result, each $(F'_w, \lambda'_w)$ is a Weinstein manifold.
Hence, for $u \in (0,\eps]$ and for $w = u e^{(1-t)i\pi}$, $t \in [0,1]$, the
Weinstein manifolds $(F'_w, \lambda'_w)$ define a homotopy from $(F_{-u},
\lambda'_{-u})$ to $(F_u, \lambda'_u)$. Finally, we concatenate this homotopy at
each end of the interval with the barycentric homotopy from $\lambda_{\pm u}$ to
$\lambda'_{\pm u}$, which consists of Weinstein structures on $F_{\pm u}$, to
obtain the desired homotopy from $(F_{-u}, \lambda_{-u})$ to $(F_u, \lambda_u)$.
Note that the symplectic form $\dlambda'_w$ on the perturbed fiber $F'_w$ does
not coincide in $D$ with the one induced by the symplectic form of $\sT M$, but
if necessary, this can be remedied using Moser's trick.    
\end{proof}
   
\begin{proof}[Proof of the lemma]
Let $\sigma \on \R_{\ge0} \to \R_{\ge0}$ be a function satisfying $\sigma(t) =
t$ for $t \le 4\delta^2$ and $\sigma(t) \equiv 0$ for $t \ge 9\delta^2$.
We define $\rho^\#_k \on \C^n \to \R$ by
\[ \rho^\#_k(z) \de \rho_k(z) + \tfrac c2 \sigma(|z|^2), \]
where $c$ is a real parameter which we fix small enough that $\rho^\#$ is
pseudoconvex. 

On the one hand, the tangent space of $F_w$ at any point $z = x+iy$ is the
complex vector space defined by 
\[ \sum_1^n z_j \dz_j = 0. \]
On the other hand,
\[ \drho^\#_k = \drho_k
 + \tfrac c2 \sigma'(|z|^2)\, \d(|z|^2). \]
A routine calculation then shows that $\rT_zF_w$ is contained in the kernel of
$\lambda^\# = \dc\rho^\#$ if and only if $z = (z_1 \etc z_n)$, where $z_j = x_j
+ iy_j$, satisfies the following equations:
\begin{align*}
   y_lx_j &= x_ly_j && \text{for $0 \le j < l \le k$,} \tag{1} \\
   y_lx_j &= x_ly_j && \text{for $k < j < l \le n$,} \tag{2} \\
   x_lx_j &= y_ly_j && \text{for $0 \le j \le k < l \le n$,} \tag{3} \\ 
   [1+c\sigma'(|z|^2)]\, y_lx_j &= c \sigma'(|z|^2)\, x_ly_j &&
   \text{for $0 \le j \le k < l \le n$.} \tag{4}
\end{align*}
Let $K$ be the set of those points $z \in E$ which are solutions of the above
system. Since $K$ is compact, we just need to show that $h$ does not vanish on
$K$, which is easily checked by a case-by-case analysis that we briefly sketch
below.

For $z \in K$, we first note that $\sigma'(|z|^2) = 1$. Then we write $z = (z',
z'') \in \C^k \times \C^{n-k}$, with $z' = x'+iy'$ and $z'' = x''+iy''$. By (1),
$x' = (x_1 \etc x_k)$ and $y' = (y_1 \etc y_k)$ are linearly dependent; by (2),
$x'' = (x_{k+1} \etc x_n)$ and $y'' = (y_{k+1} \etc y_n)$ are also linearly
dependent. Now assume for instance that $x' \ne 0$ and $y'' \ne 0$. Then $y' =
\mu' x'$ and $x'' = \mu'' y''$ for some $\mu', \mu'' \in \R$. Next, (3) implies
that $\mu'' = \mu' \mathrel{=:} \mu$, and it follows from (4) that $\mu^2 =
(1+c)/c \ne 0$. As a result, 
\[ \im h(z) = 2\mu (|x'|^2 + |y''|^2)
   = \frac{ 2\mu }{ 1+\mu^2 } |z|^2 \ne 0. \]
The other cases are pretty similar.
\end{proof}

We now complete this paper with a few more remarks and comments about the
topology and the geometry of the Lefschetz fiber we have constructed.

Fix $u \in \R - \Discr$. The Morse--Bott handle decomposition of $F_u$ given by
$\rho_u$ can be viewed as follows. As we already said, the level set $Q_u$ is
the minimum of $\rho_u$. To detect the other critical submanifolds, we appeal
to Proposition \ref{p:f}: by properties 2 and 3, for every point $a \in \Crit$,
the value $f_r(C_a^+)$ (resp.\ $f_r(C_a^-)$) is an increasing (resp.~decreasing)
and unbounded function of~$r$; hence, for each $a \in \Crit$, there is a unique
$r$ such that $f_r(C_a^+) = u$ (resp.\ $f_r(C_a^-) = u$) if $\varphi(a) < u$
(resp.~$\varphi(a) > u$), and none otherwise. By the Lagrange multiplier theorem
(as seen in the proof above), the critical submanifolds of $\rho_u$, besides
$Q_u$, are the spheres $C_a^\pm$ contained in $F_u$, and they lie in $V_r$ where
$f_r(C_a^\pm) = u$. Thus, $\rho_u$ has exactly one critical sphere for each $a
\in \Crit$, which is $C_a^+$ (of dimension $\ind(a)-1$) if $\varphi(a) < u$ and
$C_a^-$ (of dimension $n-\ind(a)-1$) if $\varphi(a) > u$.

Now take two critical points $a, b$ with consecutive values in $\Discr$. If $u
< \varphi(a) < \varphi(b)$, then $\rho_u(C_a^-) < \rho_u(C_b^-)$ (for the copies
of $C_a^-, C_b^-$ sitting in $F_u$); but if $\varphi(a) < u < \varphi(b)$, the
ordering of $\rho_u(C_a+)$ and $\rho_u(C_b^-)$ depends on the position of $u$
between $\varphi(a)$ and $\varphi(b)$ (and for some $u$, the two spheres lie at
the same level of $\rho_u$).

In the situation where $\varphi(a) < u < \varphi(b)$, we can also locate $Q_u$
with respect to the vanishing cycles 
$Z_u(a), Z_u(b) \subset F_u$ associated with $a, b$. The invariant manifolds $E^+(a)$ and $E^-(b)$ being transverse to
each other, their sphere conormal bundles are disjoint. Moreover, it follows
from the discussion in Example \ref{x:locmod}-c that $Q_u$ intersects cleanly
$Z_u(a)$ and $Z_u(b)$ along the attaching spheres $K_u(a) +Q_u \cap E^+(a)$ and
$K_u(b) = Q_u \cap E^-(b)$, respectively. The vanishing cycle $Z_u(a)$ is the
union of $K_u(a)$ and the stable manifold of $C_a^+$ for the gradientlike field
$\dvf\lambda_u$, and similarly for $Z_u(b)$. Finally, Lemma \ref{l:surgery}
implies that, for any real regular value $u_-$ (resp.\ $u_+$) in the component
of $\R - \Discr$ immediately below $\varphi(a)$ (resp.\ above $\varphi(b)$), the
parallel transport $F_{u_\pm} \to F_u$ takes $Q_{u_\pm}$ to an exact Lagrangian
submanifold isotopic to the Mores--Boot lagrangian surgery of $Q_u$ with
$Z_u(a)$ and $Z_u(b)$, respectively. All these incidence relations, of course,
are preserved by parallel transport.      

\begin{example}[Lefschetz fiber of a Heegaard splitting]
Let $M$ be a closed oriented $3$--manifold, $\varphi \on M \to \R$ an ordered
Morse function with only one minimum and one maximum, $\nu$ an adapted gradient
satisfying the Morse--Smale condition, and $Q$ a regular level set of $\varphi$
separating the critical points of index $1$ from those of index $2$. The above
discussion shows that the Lefschetz fiber of $(\varphi,\nu)$ is the Weinstein
$4$--manifold $F$ obtained as follows from the Heegaard splitting given by~$Q$.

Let $g$ denote the genus of $Q$. Then the unstable (resp.\ stable) manifolds of
the critical points of index $1$ (resp.\ $2$) intersect $Q$ along $g$ disjoint
embedded curves $\alpha_1 \etc \alpha_g$ (resp.\ $\beta_1 \etc \beta_g$), and
since $\nu$ satisfies the Morse--Smale condition, each $\alpha_j$ is transverse
to each $\beta_l$. This transversality implies that the sphere conormal bundles
\[ \sSN\alpha_1 \etc \sSN\alpha_g, \sSN\beta_1 \etc \sSN\beta_g
   \subset \sSN Q \]
are disjoint, and hence provide $4g$ disjoint embedded framed curves in the
boundary of the disk cotangent bundle $\sDT Q$. Then $F$ is the (completion of)
the Weinstein domain obtained by attaching a Weinstein handle to $\sDT Q$ along
each of these $4g$~curves. The vanishing cycle associated to any critical point     
of index $1$ (resp.\ $2$) is the union of the corresponding $\sN\alpha_j$ (resp.\
$\sN\beta_l$) and the two Weinstein handles attached to it. The vanishing cycle
associated with the minimum (resp.\ the maximum), or equivalently the level set
just above the minimum (resp.\ below the maximum) appears as the result of the
$g$ Morse--Bott Lagrangian surgeries of $Q$ with the vanishing cycles including
the curves $\alpha_j$ (resp.\ $\beta_l$). We refer to \cite{Sr} for details.
\end{example}

A more global picture of the Lefschetz fiber is provided by the hypersurface
$\sSN(\nu)$ of $\sST M$ which, as already mentioned, is its double. To see this,
pick a real value $u \in \R - \varphi(M)$ which is so large that $F_u$ lies
entirely in the region of $\sT M$ where $h$ is homogeneous --- that is, where
$f_r = rf_\infty$ in the notations of Proposition \ref{p:f}. Then the projection
$F_u \subset \sT M - M \to \sST M$ maps $F_u$ diffeomorphically to one half of
$\sSN(\nu)$ (determined by the position of $u$ with respect to $\varphi(M)$)
limited by the level set $f_\infty = 0$. The proof is roughly as follows: first
of all, everything takes place in $\sN(\nu) = \{g=0\}$, and $\dvf\lambda$ is
tangent to $\sN(\nu)$; next, $rf_\infty = u$ is equivalent to $f_\infty = u/r$;
this shows that $\dvf\lambda$ is transverse to $F_u$ inside $\sN(\nu)$, and so
it projects diffeomorphically to its image in $\sSN(\nu)$. Moreover, this
diffeomorphism maps the Liouville field $\dvf\lambda_u$ to a vector field which
is proportional to $\cnu$ (because at each point, $\dvf\lambda$, $\hnu$ and
$\cnu$ lie in the same tangent plane).  

\medskip

We conclude this discussion with a few words about the relationships between our
Lefschetz fibration and the Weinstein structure of the cotangent bundle $\sT M$.  
Symplectic Lefschetz fibrations on a Weinstein manifold $W$ are often requested
to satisfy more conditions than those given in Definition \ref{d:slf} (see for
instance the notion of ``abstract Weinstein Lefschetz fibration'' in \cite{GP}).
Mainly, the Lefschetz thimbles associated to a complete system of vanishing
paths (see \cite{Se} are required to appear also as the top-dimensional handles
in a Weinstein presentation of $W$. Our Lefschetz fibration $h \on W = \sT M \to \C$
has this property (for any Morse function $\varphi \on M \to \R$), so maybe it is
worth explaining why briefly. 

Take a smooth arc $I$ in $\C$ which avoids $\Discr$ and intersects each bounded
component of $\R - \Discr$ transversely in a single point. Then a sufficiently
small metric neighborhood of $I$ is a smooth disk $D \subset \C - \Discr$ which
intersects each bounded component of $\R - \Discr$ in a segment, and we can find
disjoint segments $J_u \subset \R$, $u \in \Discr$, such that each $J_u$ avoids
$D$ in its interior but connects $u$ to a point in $\del D$. By Proposition \ref
{p:w}, we can endow each fiber $F_w = h^{-1}(w)$, $w \in D$, with a Weinstein
structure. Then $h^{-1}(D)$, equipped with the vanishing cycles provided in its
boundary by the segments $J_u$, $u \in \Discr$, is roughly what is called an
abstract Weinstein Lefschetz fibration in \cite{GP}. In other words, $h$ can be
regarded as a (hopefully a bit more concrete at this point) Weinstein Lefschetz
fibration.

\end{document}